\newtheorem{thm}{Theorem}[section]
\newtheorem{definition}{Definition}[section]
\newtheorem{lemma}{Lemma}[section]
\newtheorem{remark}{Remark}[section]
\newtheorem{example}{Example}[section]
\numberwithin{equation}{section}
\definecolor{newcolor1}{rgb}{.8,.349,.1}
\colorlet{bblue}{blue!50!black}
\def\e{\mbox{\boldmath $e$}}
\def\f{\mbox{\boldmath $f$}}
\def\g{\mbox{\boldmath $g$}}
\def\m{\mbox{\boldmath $m$}}
\def\p{\mbox{\boldmath $p$}}
\def\q{\mbox{\boldmath $q$}}
\def\x{\mbox{\boldmath $x$}}
\def\0{\mbox{\boldmath $0$}}
\def\um{\underline{\bm m}}
\begin{document}

\title[A Semi-implicit Projection Method for LL Equation]{Convergence Analysis of A Second-order Semi-implicit Projection Method for Landau-Lifshitz Equation}

\author{Jingrun Chen}
\address{School of Mathematical Sciences\\ Soochow University\\ Suzhou\\ China.}
\email{jingrunchen@suda.edu.cn}



\author{Cheng Wang}
\address{Mathematics Department\\ University of Massachusetts\\ North Dartmouth\\ MA 02747\\ USA.}
\email{cwang1@umassd.edu}

\author{Changjian Xie}
\address{School of Mathematical Sciences\\ Soochow University\\ Suzhou\\ China.}
\email{20184007005@stu.suda.edu.cn}

\subjclass[2010]{35K61, 65N06, 65N12}

\date{\today}


\keywords{Landau-Lifshitz equation, backward differentiation formula, semi-implicit scheme, second-order accuracy}

\begin{abstract}
The numerical approximation for the Landau-Lifshitz equation,  the dynamics of magnetization in a ferromagnetic material, is taken
into consideration. This highly nonlinear equation, with a non-convex constraint, has several equivalent forms, and involves solving an auxiliary problem in the infinite domain. All these features have posed interesting challenges in developing numerical methods. In this paper, we first present a fully discrete semi-implicit method  for solving the Landau-Lifshitz equation based on the second-order backward differentiation formula and the one-sided extrapolation (using previous time-step numerical values). A projection step is further used to preserve the length of the magnetization. Subsequently, we provide a rigorous convergence analysis for the fully discrete numerical solution by introducing two sets of approximated solutions to preceed estimation alternatively, with unconditional stability and second-order accuracy in both time and space, provided that the spatial step-size is the same order as the temporal step-size, which remarkably relax restrictions of temporal step-size compared to the implicit schemes. And also, the unique solvability of the numerical solution without any assumptions for the step size in both time and space is theoretically justified, which turns out to be the first such result for the micromagnetics model. All these theoretical properties are verified by numerical examples in both one- and three- dimensional spaces.
\end{abstract}

\maketitle

\section{Introduction}
Micromagnetics is a continuum theory describing magnetization patterns inside ferromagnetic media. The dynamics of magnetization is governed by the Landau-Lifshitz (LL) equation \cite{Landau2015On}. This highly nonlinear equation indicates a non-convex constraint, which has always been a well-known difficulty in the numerical analysis. And also, this equation has several equivalent forms, and an auxiliary problem in the infinite domain has to be involved. All these features have posed interesting challenges in developing numerical methods. In the past several decades, many works have focused on the mathematical theory and numerical analysis of the LL equation \cite{kruzik2006recent, maekawa2006concepts, Shinjo2009Nanomagnetism}. The well-posedness of LL-type equations can be found in~\cite{guo2008landau, Prohl2001Computational, A1985On};
two structures of the solution regularity have been investigated. In the framework of weak solution, the existence of global weak solution in $\mathbb{R}^3$ was proved in \cite{alouges1992global} and in \cite{guo1993landau} on a bounded domain $\Omega\subset \mathbb{R}^2$;  the nonuniqueness of weak solutions was demonstrated in \cite{alouges1992global} as well. In the framework of strong solution, local existence and uniqueness, and global existence and uniqueness with small-energy initial data for strong solutions to the LL equation in $\mathbb{R}^3$ was shown in \cite{carbou2001regular1}. Local existence and uniqueness of strong solutions on a bounded domain $\Omega$ was proved in \cite{carbou2001regular}; global existence and uniqueness of strong solutions for small-energy initial data on a 2-D bounded domain 
was established, provided that $\|\nabla\m_0\|_{H^1(\Omega)}$ is small enough for the bounded domain $\Omega\subset \mathbb{R}^2$. A similar uniqueness analysis was provided in~\cite{melcher2012global} as well. We may refer to \cite{guo1993landau, zhouyulin1991EXISTENCE} for the existence of unique local strong solution. 

Accordingly, numerous numerical approaches have been proposed to demonstrate the mathematical theory; review articles could be found in \cite{cimrak2007survey, kruzik2006recent}. The first finite element work was introduced by Alouges and his collaborators~\cite{alouges2008new, alouges2006convergence, alouges2014convergent, alouges2012convergent}, in which rigorous convergence proof was included with first-order accuracy in time and second-order accuracy in space. This method was further developed to reach almost the second-order temporal accuracy~\cite{alouges2014convergent, kritsikis2014beyond}. In another finite element work by Bartels and Prohl~\cite{bartels2006convergence}, they presented an implicit time integration method with second-order accuracy and unconditional stability. However, a nonlinear solver is needed at each time step, and a theoretical justification of the unique solvability of the numerical solution has not been available. And also, a step-size condition $k=\mathcal{O}(h^2)$ is needed to guarantee the existence of the solution for the fixed point iteration (with $k$ the temporal step-size and $h$ the spatial mesh-size), which is highly restrictive. A similar finite element scheme was reported by Cimr\'{a}k~\cite{cimrak2009convergence}. Again, a nonlinear solver is necessary at each time step, and the same step-size condition has to be imposed. The existing works of finite difference method to the LL equation may be referred to~\cite{weinan2001numerical, fuwa2012finite, jeong2010crank, Kim2017The, Yamada2004Implicit}. In \cite{weinan2001numerical}, a  time stepping method in the form of a projection method was proposed; this method is implicit and unconditionally stable, and the rigorous proof was provided with the first-order accuracy in time and second-order accuracy in space. In \cite{jeong2010crank}, an updated source term was used, and an iteration algorithm was repeatedly performed until the numerical solution converges. In \cite{Kim2017The}, the explicit and implicit mimetic finite difference algorithm was developed.

Regarding to the temporal discretization, the first kind of time-stepping scheme is the Gauss-Seidel projection method proposed by  Wang, Garc\'{i}a-Cervera, and E \cite{wang2001gauss}, in which $|\nabla \m|^2$ was treated as the Lagrange multiplier for the non-convex constraint $|\m|=1$
in the point-wise sense with $\m$ the magnetization vector field. The resulting method is first-order accurate in time and is unconditionally stable. The second kind of time-stepping scheme is called geometric integration method. In \cite{jiang2001hysteresis}, Jiang, Kaper, and Leaf developed the semi-analytic integration method by analytically integrating the system of ODEs, obtained after a spatial discretization of the LL equation. This is an explicit method with first-order accuracy, hence is subject to the CFL constraint. Such an approach has been applied in \cite{krishnaprasad2001cayley} (which yields the same numerical solution as the mid-point method, with second-order accuracy in time), and in a more general setting in \cite{lewis2003geometric} using the Cayley transform to lift the LL equation to the Lie algebra of the three-dimensional rotation group.  In addition, the first, second and fourth-order accurate temporal approximations were examined in~\cite{lewis2003geometric},
which is more amenable for building numerical schemes with the high-order accuracy. The third kind of time-stepping scheme is called the mid-point method~\cite{bertotti2001nonlinear, d2005geometrical}, which is second-order accurate, unconditionally stable, and preserves the Lyapunov and Hamiltonian structures of the LL equation. Moreover, the fourth kind of time-stepping method is the high-order Runge-Kutta algorithms \cite{romeo2008numerical}. Also see other related works~\cite{cimrak2004iterative, di2017linear, jeong2014accurate,  kritsikis2014beyond}, etc.

\par Based on the linearity of the discrete system, we can also classify numerical methods into the explicit scheme~\cite{alouges2006convergence, jiang2001hysteresis}, the fully implicit scheme~\cite{bartels2006convergence, fuwa2012finite, Prohl2001Computational} and the semi-implicit scheme~\cite{cimrak2005error, weinan2001numerical, gao2014optimal, lewis2003geometric, wang2001gauss}. In particular, the semi-discrete schemes are introduced in \cite{Prohl2001Computational} for 2-D and in \cite{cimrak2005error} for 3-D formulation of the LL equation. Error estimates are derived under the existence assumption for the strong solution.

From the perspective of convergence analysis, it is worthy of mentioning \cite{cimrak2004iterative}, in which the fixed point iteration technique was used for handling the nonlinearities; the second-order convergence in time was proved, and was confirmed by numerical examples.
It is noticed that, for all above-mentioned works with the established convergence analysis, a nonlinear solver has to be used at each time step, for the sake of numerical stability. However, the unique solvability analysis for these nonlinear numerical schemes has been a very challenging issue at the theoretical level, due to the highly complicated form in the nonlinear term.  The only relevant analysis was reported in~\cite{fuwa2012finite}, in which the unique solvability was proved under a very restrictive condition, $k \le C h^2$. And also, a projection step has been used in many existing works, to preserve the length of the magnetization. Its nonlinear nature makes a theoretical analysis highly non-trivial. In turn, a derivation of the following numerical scheme is greatly desired: second-order accuracy in time and linearity of the scheme at each time step, so that
the length of magnetization is preserved in the point-wise sense, and an optimal rate error estimate and unconditionally unique solvability analysis could be established at a theoretical level.

\par In this work, we propose and analyze a second-order accurate scheme that satisfies these desired properties. The second-order
backward differentiation formula (BDF) approximation is applied to obtain an intermediate magnetization $\tilde{\m}$, and the right-hand-side nonlinear terms are treated in a semi-implicit style with a second-order extrapolation applied to the explicit coefficients. Such a numerical algorithm leads to a linear system of equations with variable coefficients to solve at each time step. Its unconditionally unique solvability
(no condition is needed for the temporal step-size in terms of spatial step-size) is guaranteed by a careful application of the monotonicity analysis, the so-called Browder-Minty lemma. A projection step is further used to preserve the unit length of magnetization at each time step,
which poses a non-convex constraint. More importantly, we provide a rigorous convergence and error estimate, by the usage of the linearized stability analysis for the numerical error functions. In particular, we notice that, an \textit{a priori} $W_h^{1, \infty}$ bound assumption for the numerical solution at the previous time steps has to be imposed to pass through the convergence analysis. As a consequence, the standard $L^2$ error estimate is insufficient to recover such a bound for the numerical solution. Instead, we have to perform the $H^1$ error estimate, and such a $W_h^{1, \infty}$ bound could be obtained at the next time step as a consequence of the $H^1$ estimate, via the help of the inverse inequality combined with a mild time step-size condition $k=\mathcal{O}(h)$. Careful error estimates for both the original magnetization $\m$ and the intermediate magnetization $\tilde{\m}$ have to be taken into consideration at the projection step (a highly nonlinear operation). To the best of our knowledge, it is the first such result to report an optimal convergence analysis with second order accuracy in both time and space.

\par The rest of this paper is organized as follows. In \cref{sec:main theory}, we introduce the fully discrete numerical scheme and state the main theoretical results: unique solvability analysis and optimal rate convergence analysis. Detailed proofs are also provided in this section. Numerical results are presented in \cref{sec:experiments}, including both the 1-D and 3-D examples to confirm the theoretical analysis.
Conclusions are drawn in \cref{sec:conclusions}.

\section{Main theoretical results}
\label{sec:main theory}
The LL equation reads as
\begin{align}\label{c1}
{\m}_t=-{\m}\times\Delta{\m}-\alpha{\m}\times({\m}\times\Delta{\m})
\end{align}
with
\begin{equation}\label{boundary}
\frac{\partial{\m}}{\partial \boldmath {\nu}}\Big|_{\Gamma}=0,
\end{equation}
where $\Gamma = \partial \Omega$ and $\boldmath {\nu}$ is the unit outward normal vector along $\Gamma$.
Here ${\m}\,:\,\Omega\subset\mathbb{R}^d\to S^2$ represents the magnetization vector field with $|{\m}|=1,\;\forall x\in\Omega$,
$d=1,2,3$ is the spatial dimension, and $\alpha>0$ is the damping parameter. The first term on the right hand side of \cref{c1} is the
gyromagnetic term, and the second term is the damping term. Compared to the original LL equation \cite{Landau2015On}, \cref{c1} only
includes the exchange term which poses the main difficulty in numerical analysis, as done in the literature \cite{weinan2001numerical, cimrak2004iterative, bartels2006convergence, gao2014optimal}. Application of the scheme \cref{eqn:bdf2} to the original
LL equation under external fields will be presented in another publication \cite{Xie2018}. To ease the presentation,
we set $\Omega=[0, 1]$ when $d=1$ and $\Omega=[0, 1]\times[0, 1]\times[0, 1]$ when $d=3$.

\subsection{Finite difference discretization and the fully discrete scheme}\label{discretisations}
The finite difference method is used to approximate \cref{c1} and \cref{boundary}.
Denote the spatial step-szie by $h$ in the 1-D case and divide $[0,1]$ into $N_x$ equal segments;
see the schematic mesh in \cref{spatial_mesh}. Define $x_i=ih$, $i=0,1,2,\cdots,N_x$, with $x_0=0$, $x_{N_x}=1$.
and $\hat{x}_i=x_{i-\frac{1}{2}}=(i-\frac{1}{2})h$, $i=1,\cdots,N_x$.
Denote the magnetization obtained by the numerical scheme at $(\hat{x}_i,t^n)$ by $\m_i^n$.
To approximate the boundary condition \cref{boundary}, we introduce ghost points $x_{-\frac{1}{2}}, x_{N_x+\frac{1}{2}}$
and apply Taylor expansions for $x_{-\frac{1}{2}}$, $x_{\frac{1}{2}}$ at $x_0$, and $x_{N_x+\frac{1}{2}}$, $x_{N_x-\frac{1}{2}}$
at $x_{N_x}$, respectively.  We then obtain a third order extrapolation formula:
\[
\m_1=\m_0, \quad \m_{N_x+1}=\m_{N_x}.
\]
In the 3-D case, we have spatial step-sizes $h_x=\frac{1}{N_x}$, $h_y=\frac{1}{N_y}$, $h_z=\frac{1}{N_z}$ and grid points $(\hat{x}_i,\hat{y}_j,\hat{z}_k)$, with $\hat{x}_i=x_{i-\frac{1}{2}}=(i-\frac{1}{2})h_x$, $\hat{y}_j=y_{j-\frac{1}{2}}=(j-\frac{1}{2})h_y$ and $\hat{z}_k=z_{k-\frac{1}{2}}=(k-\frac{1}{2})h_z$ ( $ 0 \le i \le N_x+1$, $0 \le j \le N_y+1$, $0 \le k \le N_z+1$). The extrapolation formula
along the $z$ direction near $z=0$ and $z=1$ is
\begin{equation}\label{BC-1}
\m_{i,j,1}=\m_{i,j,0}, \quad \m_{i,j,N_z+1}=\m_{i,j,N_z}.
\end{equation}
Extrapolation formulas for the boundary condition along other directions can be derived similarly.

\begin{figure}[htbp]
	\centering
	\begin{tikzpicture}[scale=0.8]
	\draw(1,0)--(14,0);
	\foreach \x/\xtext in {2/$x_0$,4/$x_1$,6/$x_{i-1}$,8/$x_i$,10/$x_{i+1}$,13/$x_{N_x}$}
	\draw(\x,2pt)--(\x,0) node[below] {\xtext};
	\foreach \x/\xtext in {1/$x_{-\frac{1}{2}}$,3/$x_{\frac{1}{2}}$,5/$\cdots$,7/$x_{i-\frac{1}{2}}$,9/$x_{i+\frac{1}{2}}$,11/$\cdots$,12/$x_{N_x-\frac{1}{2}}$,14/$x_{N_x+\frac{1}{2}}$}
	\draw(\x,0)--(\x,0) node[below] {\xtext};
	\draw(1,0) -- node[above=0.4ex] {ghost point}  (2,0);
	\draw(14,0) -- node[above=0.4ex] {ghost point}  (13,0);
	\foreach \x in {1,3,7,9,12,14}
	\draw[blue](\x,0) circle (1.0 mm);
	\end{tikzpicture}
	\caption{Illustration of the 1-D spatial mesh.} \label{spatial_mesh}
\end{figure}
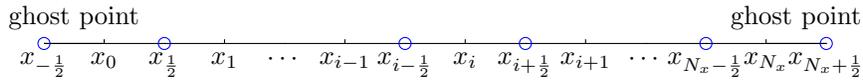

The standard second-order centered difference applied to $\Delta \m$ results in
\begin{align*}
\Delta_h \m_{i,j,k} &=\frac{\m_{i+1,j,k}-2\m_{i,j,k}+\m_{i-1,j,k}}{h_x^2}\\
&+\frac{\m_{i,j+1,k}-2\m_{i,j,k}+\m_{i,j-1,k}}{h_y^2}\nonumber\\
&+\frac{\m_{i,j,k+1}-2\m_{i,j,k}+\m_{i,j,k-1}}{h_z^2},\nonumber
\end{align*}
and the discrete gradient operator $\nabla_h \m$ with $\m=(u, v, w)^T$ reads as
\begin{align*}
\nabla_h\m_{i,j,k} = \begin{bmatrix}
\frac{u_{i+1,j,k}-u_{i,j,k}}{h_x}&\frac{v_{i+1,j,k}-v_{i,j,k}}{h_x}&\frac{w_{i+1,j,k}-w_{i,j,k}}{h_x}\\
\frac{u_{i,j+1,k}-u_{i,j,k}}{h_y}&\frac{v_{i,j+1,k}-v_{i,j,k}}{h_y}&\frac{w_{i,j+1,k}-w_{i,j,k}}{h_y}\\
\frac{u_{i,j,k+1}-u_{i,j,k}}{h_z}&\frac{v_{i,j,k+1}-v_{i,j,k}}{h_z}&\frac{w_{i,j,k+1}-w_{i,j,k}}{h_z}
\end{bmatrix}.
\end{align*}

Denote the temporal step-size by $k$, and define $t^n=nk$, $n\leq \left\lfloor\frac{T}{k}\right\rfloor$
with $T$ the final time. The second-order BDF approximation is applied to the temporal derivative:
\begin{equation*}  \label{BDF2-1}
\frac{\frac{3}{2}\m_h^{n+2}-2\m_h^{n+1}+\frac{1}{2}\m_h^n}{k} = \frac{\partial}{\partial t}\m_h^{n+2} + \mathcal{O}(k^2) . 	 	
\end{equation*}
Note that the right hand side of the above equation is evaluated at $t^{n+2}$, a direct application of the BDF method leads to a fully nonlinear scheme. To overcome this difficulty, we come up with a semi-implicit scheme, in which the nonlinear coefficient is approximated by the
second-order extrapolation formula:
\begin{align}\label{eqn:bdf2}
\frac{\frac{3}{2}{\m}_h^{n+2}-2{\m}_h^{n+1}+\frac{1}{2}{\m}_h^n}{k} &= -\left( 2{\m}_h^{n+1}-{\m}_h^n\right)\times\Delta_h{\m}_h^{n+2} {}\\
&-\alpha\left( 2{\m}_h^{n+1}-{\m}_h^n\right)\times\left((2{\m}_h^{n+1}-{\m}_h^n)\times\Delta_h{\m}_h^{n+2} \right).\nonumber
\end{align}
A projection step is then added to preserve the length of magnetization. This scheme has been used to study domain wall dynamics
under external magnetic fields \cite{Xie2018}. However, this scheme is difficult to conduct the convergence analysis due to
the lack of numerical stability of Lax-Richtmyer type. To overcome this difficulty, we separate the time-marching step and the
projection step in the following way:
\begin{align}\label{scheme-1-1}
\frac{\frac32 \tilde{\m}_h^{n+2} - 2 \tilde{\m}_h^{n+1} + \frac12 \tilde{\m}_h^n}{k}
&=  - \hat{\m}_h^{n+2} \times \Delta_h \tilde{\m}_h^{n+2} \\
&- \alpha \hat{\m}_h^{n+2} \times ( \hat{\m}_h^{n+2} \times \Delta_h \tilde{\m}_h^{n+2}), \nonumber
\end{align}
\begin{align}
\hat{\m}_h^{n+2} &= 2 \m_h^{n+1} - \m_h^n ,\label{cc}\\
\m_h^{n+2} &= \frac{\tilde{\m}_h^{n+2}}{ |\tilde{\m}_h^{n+2}| } . \label{scheme-1-2}
\end{align}

	\begin{remark}
		To kick start the iteration of our method, we are able to obtain the first-order semi-implicit projection scheme using the first-order BDF and the first-order one-sided interpolations in the same manner. The global convergence still maintain the second-order accuracy since the one-step error is first higher order than local truncation error.  
	\end{remark}
	
	\begin{remark}
		To solve the linear system \eqref{scheme-1-1} numerically, we take the sparse LU factorization solver. Afterwards, the solution is projected to the unit sphere at each time step. We thus obtain the numerical solution at final time.  
	\end{remark}

\subsection{Some notations and a few preliminary estimates}

For simplicity of presentation, we assume that $N_x = N_y =N_z=N$ so that $h_x = h_y = h_z =h$. An extension to the general case is straightforward.

First, we introduce the discrete $\ell^2$ inner product and discrete $\| \cdot \|_2$ norm.
\begin{definition}[Inner product and $\| \cdot \|_2$ norm]
	For grid functions $\f_h$ and $\g_h$ over the uniform numerical grid, we define
	\begin{align*}
	\langle {\bm f}_h,{\bm g}_h \rangle = h^d\sum_{\mathcal{I}\in \Lambda_d} \f_{\mathcal{I}}\cdot\g_{\mathcal{I}},
	\end{align*}
	where $\Lambda_d$ is the index set and $\mathcal{I}$ is the index which closely depends on $d$.
	In turn, the discrete $\| \cdot \|_2$ norm is given by
	\begin{equation*}
	\| {\bm f}_h \|_2 =  ( \langle {\bm f}_h,{\bm f}_h \rangle )^{1/2} . \label{defi-L2 norm}
	\end{equation*}
	In addition, the discrete $H_h^1$-norm is given by $\| \f_h \|_{H_h^1}^2 :=\|\f_h\|_2^2+\|\nabla_h \f_h\|_2^2$.
\end{definition}

\begin{definition}[Discrete $\| \cdot \|_\infty$ norm]
	For the grid function $\f_h$ over the uniform numerical grid, we define
	\begin{align*}
	\| \f_h \|_{\infty} = \max_{\mathcal{I}\in\Lambda_d}\|\f_{\mathcal{I}} \|_{\infty} .
	\end{align*}
\end{definition}

\begin{definition}
	For the grid function $\f_h$, we define the average of summation as
	\begin{align*}
	\overline{\f}_h=h^d\sum_{\mathcal{I}\in \Lambda_d}\f_{\mathcal{I}}.
	\end{align*}
\end{definition}

\begin{definition}
	For the grid function $\f_h$ with the normalization condition, due to the Neumann boundary condition imposed (constant functions are in the kernel of $\Delta_h$), we define the discrete $H_h^{-1}$-norm as
	\begin{equation*}
	\|\f_h\|_{-1}^2=\langle(-\Delta_h)^{-1}\f_h,\f_h\rangle.
	\end{equation*}
\end{definition}

The proof of inverse inequality, discrete Gronwall inequality, and summation by parts formula could be obtained in many existing textbooks;
we just cite the results here.
\begin{lemma}(Inverse inequality)\label{ccclemC1}.
	The classical inverse inequality implies that
	\begin{align*}\label{ccc39}
	\|{\e}_h^{n}\|_{\infty} \leq {h}^{-d/2}\|{\e}_h^{n}\|_2 ,  \quad
	\|\nabla_h{\e}_h^{n}\|_{\infty} \leq {h}^{-d/2}\|\nabla_h{\e}_h^{n}\|_2.\nonumber
	\end{align*}
\end{lemma}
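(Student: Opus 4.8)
The plan is to reduce both inequalities to a single elementary principle: a nonnegative summand can never exceed the full sum, combined with the finite-dimensional norm comparison $\|v\|_\infty \le (v\cdot v)^{1/2}$. Because the objects here are already grid functions rather than continuous fields, no scaling or reference-element machinery is needed, so the whole statement collapses to a short computation.

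First I would select an index $\mathcal{I}_0 \in \Lambda_d$ at which the discrete maximum is attained, so that $\|\e_h^n\|_\infty = \|\e_{\mathcal{I}_0}\|_\infty$ by the definition of the discrete $\|\cdot\|_\infty$ norm. Since $\e_{\mathcal{I}_0}$ is a vector in finite-dimensional space, the standard inequality between the $\infty$- and $2$-norms of a vector gives
\begin{equation*}
\|\e_{\mathcal{I}_0}\|_\infty \le \left( \e_{\mathcal{I}_0}\cdot\e_{\mathcal{I}_0} \right)^{1/2}.
\end{equation*}
On the other hand, every term in the defining sum for $\|\e_h^n\|_2^2 = h^d\sum_{\mathcal{I}\in\Lambda_d}\e_{\mathcal{I}}\cdot\e_{\mathcal{I}}$ is nonnegative, so retaining only the term indexed by $\mathcal{I}_0$ yields $h^d\,\e_{\mathcal{I}_0}\cdot\e_{\mathcal{I}_0} \le \|\e_h^n\|_2^2$, that is $\e_{\mathcal{I}_0}\cdot\e_{\mathcal{I}_0} \le h^{-d}\|\e_h^n\|_2^2$. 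Chaining these two bounds produces $\|\e_h^n\|_\infty \le h^{-d/2}\|\e_h^n\|_2$, which is the first estimate.

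For the second estimate I would apply the identical argument verbatim, with $\nabla_h\e_h^n$ playing the role of $\e_h^n$. Treating $\nabla_h\e_h^n$ as a grid function (matrix-valued in the $3$-D case, vector-valued in the $1$-D case), one selects the index where $\max_{\mathcal{I}}\|(\nabla_h\e)_{\mathcal{I}}\|_\infty$ is attained, bounds that entrywise maximum by the Euclidean norm of the corresponding block, and then discards all but that block from the sum defining $\|\nabla_h\e_h^n\|_2^2$. The same factor $h^{-d/2}$ emerges, giving $\|\nabla_h\e_h^n\|_\infty \le h^{-d/2}\|\nabla_h\e_h^n\|_2$.

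The hard part, such as it is, is not analytic: there is no genuine obstacle, in contrast to the finite-element inverse inequality, whose proof requires transforming to a reference element and comparing equivalent norms on a fixed finite-dimensional space. The only point demanding care here is the bookkeeping of the vector/matrix structure of the grid functions and the mild abuse of notation whereby $\|\cdot\|_\infty$ denotes an entrywise maximum while $\|\cdot\|_2$ is built from the Euclidean dot product; once these are reconciled by the inequality $\|v\|_\infty \le (v\cdot v)^{1/2}$, the result is immediate, which is precisely why the authors cite it rather than prove it.
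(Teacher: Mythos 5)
Your proof is correct. The paper itself gives no argument for this lemma: it states explicitly that the inverse inequality, the discrete Gronwall inequality, and summation by parts ``could be obtained in many existing textbooks; we just cite the results here.'' So there is no paper proof to compare against, and what you have written is the standard (and essentially only) argument for grid functions: pick the maximizing index $\mathcal{I}_0$, use $\|\e_{\mathcal{I}_0}\|_\infty \le (\e_{\mathcal{I}_0}\cdot\e_{\mathcal{I}_0})^{1/2}$, and drop all other nonnegative terms from $h^d\sum_{\mathcal{I}}\e_{\mathcal{I}}\cdot\e_{\mathcal{I}}$. This gives the inequality with constant exactly $1$, matching the statement, and the identical bookkeeping handles the (matrix-valued) gradient case; your observation that no reference-element or norm-equivalence machinery is needed here, in contrast to the finite-element inverse inequality, is also accurate and explains why the authors felt free to cite rather than prove it.
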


\begin{lemma}(Discrete Gronwall inequality)\label{ccclem1}. Let $\{\alpha_j\}_{j\geq 0}$, $\{\beta_j\}_{j\geq 0}$ and $\{\omega_j\}_{j\geq 0}$ be sequences of real numbers such that
	\begin{equation*}
	\alpha_j\leq \alpha_{j+1},\quad \beta_j\geq 0,\quad and \quad \omega_j\leq \alpha_j+\sum_{i=0}^{j-1}\beta_i\omega_i , \quad \forall j \geq 0.
	\end{equation*}
	Then it holds that
	\begin{equation*}
	\omega_j\leq \alpha_j\exp\left\{\sum_{i=0}^{j-1}\beta_i\right\} , \quad \forall j \geq 0.
	\end{equation*}
\end{lemma}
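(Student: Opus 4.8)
The plan is to prove the bound by induction on $j$, passing through the sharper product-form estimate $\omega_j \le \alpha_j\prod_{i=0}^{j-1}(1+\beta_i)$ and only relaxing the product to the exponential at the very end via the elementary inequality $1+x\le e^x$. First I would dispose of the base case: for $j=0$ the hypothesis reads $\omega_0\le\alpha_0$ (the index range $0\le i\le -1$ being empty), which matches both the empty product and the factor $\exp\{0\}=1$, so there is nothing to prove.

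For the inductive step I would assume $\omega_i\le\alpha_i\prod_{l=0}^{i-1}(1+\beta_l)$ for every $i<j$ and substitute into the defining recurrence, obtaining
\[
\omega_j \le \alpha_j + \sum_{i=0}^{j-1}\beta_i\omega_i \le \alpha_j + \sum_{i=0}^{j-1}\beta_i\,\alpha_i\prod_{l=0}^{i-1}(1+\beta_l).
\]
The structural observation that makes everything work is that each coefficient $\beta_i\prod_{l=0}^{i-1}(1+\beta_l)$ is nonnegative (since $\beta_i\ge 0$ and every factor $1+\beta_l\ge 1$), so the monotonicity $\alpha_i\le\alpha_j$ for $i\le j$ may be used to replace each $\alpha_i$ by $\alpha_j$ and factor it out of the sum.

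The decisive computation is then the telescoping identity
\[
1+\sum_{i=0}^{j-1}\beta_i\prod_{l=0}^{i-1}(1+\beta_l)=\prod_{i=0}^{j-1}(1+\beta_i),
\]
which I would verify by setting $P_i:=\prod_{l=0}^{i-1}(1+\beta_l)$ (with $P_0=1$) and noting $P_{i+1}-P_i=\beta_i P_i$, so that the sum collapses to $P_j-P_0$. Combining this with the previous display gives $\omega_j\le\alpha_j\prod_{i=0}^{j-1}(1+\beta_i)$, which closes the induction.

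Finally I would pass to the stated exponential form through $1+x\le e^x$, yielding $\prod_{i=0}^{j-1}(1+\beta_i)\le\exp\{\sum_{i=0}^{j-1}\beta_i\}$. I expect the only genuinely delicate point—and the step I would flag explicitly—to be the sign bookkeeping at this last stage: multiplying the product bound by the \emph{larger} exponential factor preserves the inequality only when $\alpha_j\ge 0$. This nonnegativity is automatic in the intended applications, where $\alpha_j$ collects initial data together with accumulated truncation errors, so that $\alpha_0\ge 0$ and hence $\alpha_j\ge\alpha_0\ge 0$ by monotonicity. I would therefore either list $\alpha_j\ge 0$ as a standing hypothesis or remark that the product-form conclusion holds unconditionally, with the exponential form invoking $\alpha_j\ge 0$.
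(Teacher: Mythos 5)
Your proof cannot be compared against a proof in the paper, because the paper does not give one: the authors explicitly state that the discrete Gronwall inequality ``could be obtained in many existing textbooks; we just cite the results here.'' What you have written is the standard textbook argument, and it is correct and complete: the base case is the empty-sum case, the substitution of the inductive hypothesis into the recurrence uses $\beta_i\ge 0$, the replacement of $\alpha_i$ by $\alpha_j$ uses monotonicity together with nonnegativity of the coefficients $\beta_i\prod_{l=0}^{i-1}(1+\beta_l)$, and the telescoping identity $P_{i+1}-P_i=\beta_iP_i$ with $P_i=\prod_{l=0}^{i-1}(1+\beta_l)$ collapses the sum exactly as you claim. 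Your closing caveat is moreover substantive rather than pedantic: as stated in the paper, with $\alpha_j$ allowed to be negative, the exponential form of the lemma is actually false. Take $\alpha_j\equiv -1$, $\beta_j\equiv 1$, $\omega_0=-1$, $\omega_1=-2$: the hypothesis holds since $\alpha_1+\beta_0\omega_0=-2$, and your product bound holds with equality since $\alpha_1(1+\beta_0)=-2$, yet the asserted conclusion $\omega_1\le\alpha_1e^{\beta_0}=-e$ fails. So nonnegativity of $\alpha_j$ (or just of $\alpha_0$, whence $\alpha_j\ge\alpha_0\ge 0$ by monotonicity) is a genuinely missing hypothesis in the lemma as printed, though it is harmless where the paper invokes it: in the convergence proof of Theorem 2.2 the role of $\alpha_j$ is played by initial squared-norm errors plus accumulated truncation contributions of the form $\mathcal{C}k(k^4+h^4)$, which are nonnegative. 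Either of your two proposed fixes (adding $\alpha_j\ge 0$ as a hypothesis for the exponential form, or stating the unconditional product-form bound) would make the statement airtight.
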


\begin{lemma}[Summation by parts]\label{summation}
	For any grid functions $\f_h$ and $\g_h$, with $f_h$ satisfying the discrete boundary condition~\cref{BC-1}, the following identity is valid:
	\begin{align}\label{sum1}
	\left\langle -\Delta_h \f_h,\g_h\right\rangle = \left\langle \nabla_h \f_h,\nabla_h \g_h\right\rangle .
	\end{align}
\end{lemma}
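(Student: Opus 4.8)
The plan is to reduce the $d$-dimensional identity to a one-dimensional summation-by-parts formula applied separately along each coordinate direction and to each component of the vector fields. Since $\Delta_h$ is the sum of the three one-dimensional centered second differences and the three rows of $\nabla_h$ are precisely the corresponding one-dimensional forward differences, bilinearity of $\langle\cdot,\cdot\rangle$ reduces \cref{sum1} to the following scalar relation: for fixed transverse indices and a fixed component, with the common prefactor $h^{d-2}$ and the transverse sums carried along on both sides,
\[
-\sum_{i=1}^{N}\left(f_{i+1}-2f_i+f_{i-1}\right) g_i = \sum_{i=1}^{N-1}\left(f_{i+1}-f_i\right)\left(g_{i+1}-g_i\right) .
\]
It therefore suffices to establish this one-dimensional identity under the boundary condition \cref{BC-1}; the other directions and components are handled identically.

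First I would factor the centered second difference through the (unscaled) forward difference $\delta f_i := f_{i+1}-f_i$, writing $f_{i+1}-2f_i+f_{i-1} = \delta f_i - \delta f_{i-1}$. The left-hand sum becomes $-\sum_{i=1}^{N}(\delta f_i - \delta f_{i-1})\,g_i$, to which I apply Abel's summation (a shift of the summation index). This rearrangement telescopes into an interior sum $\sum_{i=1}^{N-1}\delta f_i\,\delta g_i$ together with two boundary contributions, $\delta f_N\,g_N$ and $\delta f_0\,g_1$, arising from the endpoints.

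The crucial step, and the only place the hypothesis enters, is the vanishing of these two boundary terms. Since $\f_h$ satisfies \cref{BC-1} (and its analogues in the other directions), the ghost values coincide with the adjacent interior values, $f_0=f_1$ and $f_{N+1}=f_N$, so that $\delta f_0 = 0$ and $\delta f_N = 0$. Both boundary contributions drop out, leaving exactly $\sum_{i=1}^{N-1}\delta f_i\,\delta g_i$, which is the desired right-hand side. I would \emph{emphasize} that only $\f_h$ need satisfy the boundary condition, while $\g_h$ remains arbitrary, consistent with the statement of the lemma. Restoring the transverse sums and the $h^{d-2}$ prefactor, then summing over the three directions and all components, recovers \cref{sum1}.

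I expect the only delicate point to be the bookkeeping of index ranges, in particular aligning the cell-centered range $i=1,\dots,N$ on which $\Delta_h \f_h$ is evaluated with the edge range $i=1,\dots,N-1$ produced by the gradient inner product; this mismatch is resolved entirely by the vanishing of $\delta f$ across the two ghost edges. No step-size restriction or regularity assumption is required, and the argument is purely algebraic.
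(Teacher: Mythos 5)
Your proof is correct. Note that the paper does not actually prove \cref{summation}: it explicitly states that the summation-by-parts formula (along with the inverse inequality and the discrete Gronwall inequality) is cited from standard references, so there is no in-paper argument to compare against. Your one-dimensional Abel-summation computation is the standard argument and is complete: the reduction via bilinearity to a scalar, per-direction identity is valid because $\Delta_h$ splits into the three one-dimensional second differences and the rows of $\nabla_h$ are the matching forward differences; the boundary terms $\delta f_0\, g_1$ and $\delta f_N\, g_N$ are exactly the ones produced by the index shift; and you correctly isolate the only place the hypothesis enters, namely that the ghost-cell condition $f_0=f_1$, $f_{N+1}=f_N$ kills both boundary terms while $\g_h$ remains arbitrary, which matches the asymmetric hypothesis in the lemma statement. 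Your observation that the ghost-edge contributions $\delta f_0\,\delta g_0$ and $\delta f_N\,\delta g_N$ vanish for the same reason also disposes of the only ambiguity in the statement (whether the gradient inner product runs over interior edges only or includes the two boundary edges), so the identity holds under either convention.
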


The following estimate will be utilized in the convergence analysis. In the sequel, for simplicity of our notation, we will use the uniform constant $\mathcal{C}$ to denote all the controllable constants in this paper.

\begin{lemma}[Discrete gradient acting on cross product]\label{lem27}
	For grid functions $\f_h$ and $\g_h$ over the uniform numerical grid, we have
	\begin{align}
	\|\nabla_h({\f}\times{\g})_h\|_2^2 &\leq \mathcal{C}\Big(\|{\f}_h\|_{\infty}^2\cdot \|\nabla_h {\g}_h\|_2^2+\|{\g}_h\|_{\infty}^2\cdot \|\nabla_h{\f}_h\|_2^2\Big),\label{lem27_1}\\
	\left\langle (\f_h\times \Delta_h \g_h)\times \f_h,\g_h \right\rangle &=\left\langle \f_h\times(\g_h\times \f_h),\Delta_h\g_h\right\rangle\label{lem27_2},\\
	\left\langle\f_h\times(\f_h\times \g_h),\g_h \right\rangle &=-\|\f_h\times \g_h\|_2^2\label{lem27_3}.
	\end{align}
\end{lemma}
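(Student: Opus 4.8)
The plan is to handle the three assertions separately, exploiting the fact that \cref{lem27_3} and \cref{lem27_2} are purely pointwise algebraic identities for the cross product—so that they pass through the discrete summation verbatim—while the gradient bound \cref{lem27_1} rests on a discrete Leibniz rule combined with the elementary estimate $|\p\times\q|\le|\p||\q|$. For \cref{lem27_3} I would work at a single grid point. The $\mathrm{BAC}$-$\mathrm{CAB}$ expansion gives $\f\times(\f\times\g)=(\f\cdot\g)\f-|\f|^2\g$, whence $[\f\times(\f\times\g)]\cdot\g=(\f\cdot\g)^2-|\f|^2|\g|^2$. Lagrange's identity $|\f\times\g|^2=|\f|^2|\g|^2-(\f\cdot\g)^2$ then identifies this integrand with $-|\f\times\g|^2$; multiplying by $h^d$ and summing over $\mathcal{I}\in\Lambda_d$ produces \cref{lem27_3} with no boundary terms.

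For \cref{lem27_2} I would reduce both inner products to a common pointwise integrand, so that again no summation by parts is required (note that $\Delta_h\g$ appears on both sides). On the left, cyclic invariance of the scalar triple product rewrites $[(\f\times\Delta_h\g)\times\f]\cdot\g$ as $(\f\times\Delta_h\g)\cdot(\f\times\g)$, and the Lagrange identity for a dot product of two cross products evaluates this to $|\f|^2(\g\cdot\Delta_h\g)-(\f\cdot\g)(\f\cdot\Delta_h\g)$. On the right, $\mathrm{BAC}$-$\mathrm{CAB}$ gives $\f\times(\g\times\f)=|\f|^2\g-(\f\cdot\g)\f$, so that $[\f\times(\g\times\f)]\cdot\Delta_h\g$ equals precisely the same scalar. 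The two integrands therefore coincide point by point, and summing against $h^d$ yields the stated equality.

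For \cref{lem27_1} the essential ingredient is the discrete product rule for the forward difference. In the $x$-direction, inserting and removing $\f_{i}\times\g_{i+1}$ yields
\[
(\f\times\g)_{i+1}-(\f\times\g)_{i}=(\f_{i+1}-\f_{i})\times\g_{i+1}+\f_{i}\times(\g_{i+1}-\g_{i}),
\]
with the analogous decompositions along $y$ and $z$. Dividing by $h$, taking magnitudes with $|\p\times\q|\le|\p||\q|$, and bounding the undifferenced factors $\g_{i+1}$ and $\f_{i}$ by $\|\g_h\|_{\infty}$ and $\|\f_h\|_{\infty}$ respectively, I obtain a pointwise bound on each component of $\nabla_h(\f\times\g)_h$. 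Squaring via $(a+b)^2\le 2a^2+2b^2$, then multiplying by $h^d$ and summing over $\Lambda_d$ and the three coordinate directions, delivers \cref{lem27_1} with $\mathcal{C}=2$.

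The only place demanding genuine care is the index bookkeeping in \cref{lem27_1}: one must check that the discrete Leibniz decomposition and the ensuing summation are compatible with the ghost-point convention \cref{BC-1}, so that the shifted factors $\g_{i+1}$ and $\f_{i}$ are uniformly controlled by the discrete $\|\cdot\|_{\infty}$ norm over $\Lambda_d$ and no uncontrolled boundary contribution is produced. The vector-algebra manipulations behind \cref{lem27_2,lem27_3} are routine once the correct triple-product rearrangements are identified; I therefore expect the forward-difference accounting in \cref{lem27_1}—rather than any analytic subtlety—to be the main thing to get right.
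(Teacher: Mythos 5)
Your proposal is correct and follows essentially the same route as the paper: the discrete Leibniz expansion $(\f\times\g)_{i+1}-(\f\times\g)_i=(\f_{i+1}-\f_i)\times\g_{i+1}+\f_i\times(\g_{i+1}-\g_i)$ plus Cauchy--Schwarz for \cref{lem27_1}, and purely pointwise vector algebra (no summation by parts) for \cref{lem27_2} and \cref{lem27_3}. The only cosmetic difference is that the paper verifies the two identities by cyclic permutation of the scalar triple product, whereas you use BAC--CAB and Lagrange's identity; these are interchangeable elementary manipulations.
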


\begin{proof} Without loss of generality, we only look at the 1-D case; an extension to the 3-D case is straightforward.	We begin with the following expansion
	\begin{align}  \label{expansion-1}
	\left[\nabla_h({\f}\times {\g})\right]_{i+\frac{1}{2}} &=\frac{\f_{i+1}\times \g_{i+1}-\f_i\times \g_i}{h}  \\
	&=\frac{\f_{i+1}-\f_i}{h}\times \g_{i+1}+\f_i \times \frac{\g_{i+1}-\g_i}{h}\nonumber\\
	&=\left(\nabla_h \f\right)_{i+\frac{1}{2}}\times \g_{i+1}+\f_i\times \left(\nabla_h \g\right)_{i+\frac{1}{2}}.\nonumber
	\end{align}
	In turn, an application of the discrete H\"older inequality to~\cref{expansion-1} yields~ \cref{lem27_1}. Also note that
	\begin{align*}
	\left\langle (\f_h\times \Delta_h \g_h)\times \f_h,\g_h \right\rangle &=-\left\langle\g_h\times \f_h,\f_h\times \Delta_h\g_h \right\rangle\\
	&=\left\langle \f_h\times(\g_h\times \f_h),\Delta_h\g_h\right\rangle,\nonumber
	\end{align*}
	and
	\begin{align*}
	\left\langle\f_h\times(\f_h\times \g_h),\g_h \right\rangle
	&=	\left\langle \f_h\times\g_h,\g_h\times \f_h \right\rangle\\
	&=-\|\f_h\times \g_h\|_2^2 . \nonumber
	\end{align*}
\end{proof}

The following estimate will be used in the error estimate at the projection step.

\begin{lemma}  \label{lem 6-0}
	Consider $\um_h = \m_e + h^2 \m^{(1)}$ with $\m_e\in W^{1, \infty}$ the exact solution to \cref{c1} and $| \m_e | = 1$ at a point-wise level, and $\| \m^{(1)} \|_\infty + \| \nabla_h \m^{(1)} \|_\infty \le \mathcal{C}$. For any numerical solution $\tilde{\m}_h$, we define $\m_h = \frac{\tilde{\m}_h}{ | \tilde{\m}_h |}$. Suppose both numerical profiles satisfy the following $W_h^{1, \infty}$ bounds
	\begin{align}
	& |\tilde{\m}_h| \ge \frac12 ,  \quad \mbox{at a point-wise level} ,  \label{lem 6-1-0}
	\\
	& \| \m_h \|_{\infty} + \| \nabla_h \m_h \|_\infty \le M ,  \quad \| \tilde{\m}_h \|_{\infty} + \| \nabla_h \tilde{\m}_h \|_\infty \le M,  \label{lem 6-1}
	\end{align}
	and we denote the numerical error functions as $\e_h = \m_h - \um_h$, $\tilde{\e}_h = \tilde{\m}_h - \um_h$. Then the following estimate is valid
	\begin{equation}
	\| \e_h \|_2 \le 2 \| \tilde{\e}_h \|_2 + \mathcal{O} (h^2) ,  \quad
	\| \nabla_h \e_h \|_2 \le \mathcal{C} ( \| \nabla_h \tilde{\e}_h \|_2
	+ \| \tilde{\e}_h \|_2 ) + \mathcal{O} (h^2) . \label{lem 6-2}
	\end{equation}
\end{lemma}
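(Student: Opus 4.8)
The plan is to handle the normalization pointwise first and then lift the resulting identities to the discrete $\|\cdot\|_2$ and $\|\nabla_h\cdot\|_2$ norms. I would begin by splitting $\e_h = (\m_h - \tilde{\m}_h) + \tilde{\e}_h$, so that the lemma reduces to controlling the normalization discrepancy $\m_h - \tilde{\m}_h$. Since $\m_h = \tilde{\m}_h/|\tilde{\m}_h|$, a pointwise computation gives $\m_h - \tilde{\m}_h = \phi\,\tilde{\m}_h$ with the scalar grid function $\phi := 1/|\tilde{\m}_h| - 1$, and in particular $|\m_h - \tilde{\m}_h| = \bigl| 1 - |\tilde{\m}_h| \bigr|$ at each grid point.

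For the $L^2$ bound, I use that $|\m_e|=1$ and $\um_h = \m_e + h^2\m^{(1)}$ force $|\um_h| = 1 + \mathcal{O}(h^2)$ pointwise, while the reverse triangle inequality gives $\bigl| |\tilde{\m}_h| - |\um_h| \bigr| \le |\tilde{\m}_h - \um_h| = |\tilde{\e}_h|$. Hence $\bigl|1 - |\tilde{\m}_h|\bigr| \le |\tilde{\e}_h| + \mathcal{O}(h^2)$ at every grid point, so $\|\m_h - \tilde{\m}_h\|_2 \le \|\tilde{\e}_h\|_2 + \mathcal{O}(h^2)$, and the first inequality in \cref{lem 6-2} follows from the triangle inequality.

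The gradient estimate is the crux, and the obstacle is that a direct reverse-triangle bound on $\nabla_h |\tilde{\m}_h|$ only produces an $\mathcal{O}(1)$ constant of size $\|\nabla_h\tilde{\m}_h\|_2$ rather than an error-controlled quantity. To circumvent this I would work with the squared length $\psi := 1 - |\tilde{\m}_h|^2$ and factor $\phi = \psi\,\theta$, where $\theta := 1/\bigl(|\tilde{\m}_h|(1+|\tilde{\m}_h|)\bigr)$. The advantage of squaring is that, expanding $|\tilde{\m}_h|^2 = |\um_h|^2 + 2\,\um_h\cdot\tilde{\e}_h + |\tilde{\e}_h|^2$, the leading term satisfies $1 - |\um_h|^2 = -2h^2\m_e\cdot\m^{(1)} - h^4|\m^{(1)}|^2$, which is $\mathcal{O}(h^2)$ \emph{together with its discrete gradient}, because $\m_e, \m^{(1)}\in W^{1,\infty}$. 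Applying the discrete product rule \cref{expansion-1} to $\um_h\cdot\tilde{\e}_h$ and to $|\tilde{\e}_h|^2$, and using the bounds $\|\um_h\|_\infty, \|\nabla_h\um_h\|_\infty \le \mathcal{C}$ together with $\|\tilde{\e}_h\|_\infty \le \mathcal{C}$ (the latter from \cref{lem 6-1}), then yields $\|\psi\|_2 \le \mathcal{C}\|\tilde{\e}_h\|_2 + \mathcal{O}(h^2)$ and $\|\nabla_h\psi\|_2 \le \mathcal{C}\bigl(\|\tilde{\e}_h\|_2 + \|\nabla_h\tilde{\e}_h\|_2\bigr) + \mathcal{O}(h^2)$.

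Finally I would assemble $\nabla_h(\m_h - \tilde{\m}_h) = \nabla_h(\phi\,\tilde{\m}_h)$ by two more applications of the product rule. The scalar $\theta$ is bounded above and below by constants thanks to $|\tilde{\m}_h|\ge\frac12$ in \cref{lem 6-1-0} and the upper bound in \cref{lem 6-1}, and $\theta$ is a Lipschitz function of $|\tilde{\m}_h|$, so $\|\nabla_h\theta\|_\infty \le \mathcal{C}$; combining with the previous step gives $\|\nabla_h\phi\|_2 \le \|\theta\|_\infty\|\nabla_h\psi\|_2 + \|\nabla_h\theta\|_\infty\|\psi\|_2 \le \mathcal{C}\bigl(\|\tilde{\e}_h\|_2 + \|\nabla_h\tilde{\e}_h\|_2\bigr) + \mathcal{O}(h^2)$. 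In the last product rule $\|\nabla_h(\phi\,\tilde{\m}_h)\|_2 \le \|\tilde{\m}_h\|_\infty\|\nabla_h\phi\|_2 + \|\nabla_h\tilde{\m}_h\|_\infty\|\phi\|_2$ I deliberately pair $\nabla_h\tilde{\m}_h$ with $\phi$ measured in $\|\cdot\|_2$ rather than $\|\cdot\|_\infty$, so that only $\|\phi\|_2 \le \mathcal{C}\|\tilde{\e}_h\|_2 + \mathcal{O}(h^2)$ enters and no uncontrolled $\|\tilde{\e}_h\|_\infty$ factor appears. Adding $\|\nabla_h\tilde{\e}_h\|_2$ back through the triangle inequality gives the second estimate in \cref{lem 6-2}, completing the plan.
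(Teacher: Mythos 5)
Your proposal is correct and takes essentially the same route as the paper's own proof: the same splitting $\e_h = \tilde{\e}_h + (\m_h - \tilde{\m}_h)$ with the reverse triangle inequality and $|\m_e|=1$ for the $L^2$ bound, and the same key rationalization $1-|\tilde{\m}_h| = \bigl(1-|\tilde{\m}_h|^2\bigr)/\bigl(1+|\tilde{\m}_h|\bigr)$ for the gradient bound, where the paper keeps the numerator in the factored form $(\um_h+\tilde{\m}_h)\cdot(\um_h-\tilde{\m}_h)+(\m_e+\um_h)\cdot(\m_e-\um_h)$ while you expand it via $\tilde{\m}_h = \um_h + \tilde{\e}_h$. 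Your packaging $\phi=\psi\,\theta$ with a Lipschitz-composition bound on $\nabla_h\theta$ is only a cosmetic reorganization of the paper's direct product-rule estimates on the quotient terms, so the two arguments are mathematically identical in substance.
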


\begin{proof}
	A direct calculation shows that
	\begin{align}
	{\e}_h &= {\m}_h - \um_h=\frac{\tilde{\m}_h}{|\tilde{\m}_h|}-{\um}_h
	= \tilde{\m}_h - {\um}_h +\frac{\tilde{\m}_h}{|\tilde{\m}_h|} - \tilde{\m}_h \nonumber\\
	&= \tilde{\e}_h +\frac{\tilde{\m}_h}{|\tilde{\m}_h|}(|{\um}_h|-|\tilde{\m}_h|) + \frac{\tilde{\m}_h}{ |\tilde{\m}_h|} ( 1 - |{\um}_h|).  \label{lem 6-3}
	\end{align}
	Since $\Big||{\um}_h|-|\tilde{\m}_h|\Big| \le |{\um}_h-\tilde{\m}_h|$, we get
	\begin{equation} \label{lem 6-4} 	
	\left\|\tilde{\e}_h +\frac{\tilde{\m}_h}{|\tilde{\m}_h|}(|{\um}_h|-|\tilde{\m}_h|)\right\|_2\leq \|\tilde{\e}_h \|_2+\|\tilde{\e}_h \|_2=2\|\tilde{\e}_h\|_2 .
	\end{equation}
	For the last term on the right hand side of~\cref{lem 6-3}, we observe that
	\begin{align}\label{lem 6-5-1}
	\left| 1 - |{\um}_h | \right| = \left| | \m_e | - |{\um}_h | \right|
	\le \left|  \m_e - {\um}_h \right| = h^2 | \m^{(1)} | = \mathcal{O} (h^2) ,
	\end{align}
	
	which in turn yields
	\begin{align} \label{lem 6-5-2}
	\left\| \frac{\tilde{\m}_h}{ |\tilde{\m}_h|} ( 1 - |{\um}_h|) \right\|_2 = \mathcal{O} (h^2).  
	\end{align}
	
	As a result, a substitution of~\cref{lem 6-4} and~\cref{lem 6-5-2} into~\cref{lem 6-3} leads to the first estimate in~\cref{lem 6-2}.

	For the second inequality, we notice that
	\begin{align}\label{lem 6-6}
	\nabla_h{\e}_h &= \nabla_h\frac{\tilde{\m}_h}{|\tilde{\m}_h|} - \nabla_h {\um}_h = \nabla_h\left[ \frac{\tilde{\m}_h}{|\tilde{\m}_h|}-\frac{{\um}_h}{|\tilde{\m}_h|}\right]
	+\nabla_h\left[\frac{{\um}_h}{|\tilde{\m}_h|}-{\um}_h \right] \\
	&= \nabla_h \frac{\tilde{\e}_h}{|\tilde{\m}_h|}+\nabla_h\left[ \frac{{\um}_h}{|\tilde{\m}_h|}(1-|\tilde{\m}_h|)\right] . \nonumber
	\end{align}	
	The analysis for the first part is straightforward:
	\begin{align}\label{lem 6-7}
	\left\|\nabla_h \frac{\tilde{\e}_h}{|\tilde{\m}_h^n|}\right\|_2 &\leq \left\|\frac{1}{\tilde{\m}_h}\right\|_{\infty} \cdot \|\nabla_h\tilde{\e}_h\|_2
	+ \|\tilde{\e}_h \|_2 \cdot \left\|\nabla_h\frac{1}{|\tilde{\m}_h|}\right\|_{\infty}
	\le \mathcal{C}\|\nabla_h\tilde{\e}_h\|_2+\mathcal{C}\|\tilde{\e}_h\|_2 .
	\end{align}
	For the second part, we rewrite it as
	\begin{align*}\label{lem 6-8-1}
	& \frac{{\um}_h}{|\tilde{\m}_h|}(1-|\tilde{\m}_h|)
	= \frac{{\um}_h}{|\tilde{\m}_h|}\frac{({\um}_h+\tilde{\m}_h)({\um}_h-\tilde{\m}_h)}{1+|\tilde{\m}_h|}
	+ \frac{{\um}_h}{|\tilde{\m}_h|}\frac{(\m_e + {\um}_h)( \m_e - {\um}_h)}{1+|\tilde{\m}_h|} , 	
	\end{align*}	
	based on the fact $| \m_e | \equiv 1$. In turn, the following two bounds could be derived:
	\begin{align*}
	& 
	\left\| \nabla_h \left[ \frac{{\um}_h}{|\tilde{\m}_h|} \frac{({\um}_h+\tilde{\m}_h)({\um}_h-\tilde{\m}_h)}{1+|\tilde{\m}_h|}\right]\right\|_2\\
	\le \, & \left\|\frac{{\um}_h}{|\tilde{\m}_h|}\frac{{\um}_h+\tilde{\m}_h}{1+|\tilde{\m}_h|}\right\|_{\infty} \cdot \left\|\nabla_h({\um}_h-\tilde{\m}_h)\right\|_2 \nonumber \\
	& +\|{\um}_h-\tilde{\m}_h\|_2 \cdot \left\|\nabla_h\left[\frac{{\um}_h}{|\tilde{\m}_h|}\frac{{\um}_h+\tilde{\m}_h}{1+|\tilde{\m}_h|} \right]\right\|_{\infty}\nonumber\\
	\le \, & \mathcal{C}\|\nabla_h\tilde{\e}_h \|_2+\mathcal{C}\|\tilde{\e}_h\|_2 , \nonumber
	\end{align*}
	and	
	\begin{align*} 
	&
	\left\| \nabla_h \left[ \frac{{\um}_h}{|\tilde{\m}_h|} \frac{( \m_e + {\um}_h)( \m_e - {\um}_h)}{1+|\tilde{\m}_h|}\right]\right\|_2\\
	\le \, & \left\|\frac{{\um}_h}{|\tilde{\m}_h|}\frac{ \m_e + {\um}_h}{1+|\tilde{\m}_h|}\right\|_{\infty} \cdot \left\|\nabla_h( \m_e - {\um}_h )\right\|_2 \nonumber \\
	& +\| \m_e - {\um}_h \|_2 \cdot \left\|\nabla_h\left[\frac{{\um}_h}{|\tilde{\m}_h|}\frac{ \m_e + {\um}_h }{1+|\tilde{\m}_h|} \right]\right\|_{\infty} = \mathcal{O} (h^2) . \nonumber
	\end{align*}
	Therefore, we obtain
	\begin{equation} \label{lem 6-8-4}
	\left\|\nabla_h\frac{{\um}_h^n}{|\tilde{\m}_h^n|}(1-|\tilde{\m}_h^n|)\right\|_2
	\le  \mathcal{C} ( \|\nabla_h\tilde{\e}_h \|_2 + \|\tilde{\e}_h\|_2 ) +  \mathcal{O} (h^2) .
	\end{equation} 		
	Finally, a substitution of~\cref{lem 6-7} and \cref{lem 6-8-4} into~\cref{lem 6-6} yields the second inequality in~\cref{lem 6-2}. This completes the proof of~\cref{lem 6-0}.
\end{proof}

\subsection{The main theoretical results}\label{leading}
\par The first theoretical result is the unique solvability analysis of scheme~\cref{scheme-1-1}-\cref{scheme-1-2}.  We observe that the unique solvability for \cref{scheme-1-1} could be simplified as the analysis for
\begin{equation}\label{scheme-alt-1}
\frac{\frac32 \tilde{\m}_h - \p_h}{k}
=  - \hat{\m}_h \times \Delta_h \tilde{\m}_h
- \alpha \hat{\m}_h \times ( \hat{\m}_h \times \Delta_h \tilde{\m}_h)
\end{equation}
with $\p_h$, $\hat{\m}_h$ given.

\begin{thm} \label{thm:solvability}
	Given $\p_h$, $\hat{\m}_h$, the numerical scheme \cref{scheme-alt-1} is uniquely solvable.
\end{thm}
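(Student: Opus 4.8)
The plan is to exploit the fact that, with $\p_h$ and $\hat{\m}_h$ prescribed, \cref{scheme-alt-1} is a \emph{linear} system for the unknown $\tilde{\m}_h$ on a finite-dimensional grid space. Unique solvability of a square linear system is equivalent to triviality of its kernel, so it suffices to show that the only grid function satisfying the homogeneous version of \cref{scheme-alt-1} (set $\p_h = \bm{0}$, equivalently subtract two putative solutions) is $\tilde{\m}_h = \bm{0}$. This is the injectivity statement underlying the monotonicity/Browder--Minty argument: one associates the operator $\mathcal{F}(\tilde{\m}_h) = \frac32 \tilde{\m}_h + k\,\hat{\m}_h \times \Delta_h \tilde{\m}_h + k\alpha\, \hat{\m}_h \times (\hat{\m}_h \times \Delta_h \tilde{\m}_h)$ and establishes a coercivity estimate holding \emph{without any restriction relating $k$ and $h$}.

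First I would take the discrete inner product of the homogeneous equation with $-\Delta_h \tilde{\m}_h$. Three features make this the decisive test function. The gyromagnetic term drops out identically, since $(\hat{\m}_h \times \Delta_h \tilde{\m}_h) \cdot \Delta_h \tilde{\m}_h = 0$ at every grid point by orthogonality of the cross product. The damping term becomes $k\alpha \langle \hat{\m}_h \times (\hat{\m}_h \times \Delta_h \tilde{\m}_h), -\Delta_h \tilde{\m}_h \rangle = k\alpha \| \hat{\m}_h \times \Delta_h \tilde{\m}_h \|_2^2 \ge 0$ by \cref{lem27_3} of \cref{lem27}. The temporal term gives $\frac32 \langle \tilde{\m}_h, -\Delta_h \tilde{\m}_h\rangle = \frac32 \| \nabla_h \tilde{\m}_h \|_2^2$ after summation by parts (\cref{summation}). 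Collecting these yields the identity
\begin{equation*}
\frac32 \| \nabla_h \tilde{\m}_h \|_2^2 + k\alpha \| \hat{\m}_h \times \Delta_h \tilde{\m}_h \|_2^2 = 0 ,
\end{equation*}
so both nonnegative terms vanish; in particular $\nabla_h \tilde{\m}_h = \bm{0}$, i.e.\ $\tilde{\m}_h$ is a constant grid function (constants span the kernel of $\Delta_h$ under the Neumann condition).

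It then remains to eliminate the constant mode. For a constant field $\Delta_h \tilde{\m}_h = \bm{0}$, so the homogeneous equation collapses to $\frac32 \tilde{\m}_h = \bm{0}$, forcing $\tilde{\m}_h = \bm{0}$. Hence $\ker \mathcal{F} = \{\bm{0}\}$ and the linear system is uniquely solvable. Equivalently, in the monotone-operator language, $\mathcal{F}$ is continuous and, with respect to the $(-\Delta_h)$-weighted pairing supplemented by the mass term $\frac32\|\cdot\|_2^2$ that controls the constant mode, coercive and strictly monotone, so Browder--Minty delivers surjectivity, which together with injectivity in finite dimensions gives bijectivity.

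The main obstacle is the indefiniteness of the gyromagnetic contribution. Pairing the equation against $\tilde{\m}_h$ in the plain $L^2$ inner product leaves the term $k\langle \hat{\m}_h \times \Delta_h \tilde{\m}_h, \tilde{\m}_h\rangle$, which is sign-indefinite and can only be absorbed into $\frac32\|\tilde{\m}_h\|_2^2$ through the inverse inequality (\cref{ccclemC1}), at the cost of a restriction $k = \mathcal{O}(h^2)$ --- precisely the condition the scheme is designed to avoid. The resolution, and the crux of the \emph{unconditional} result, is the choice of test function $-\Delta_h \tilde{\m}_h$: the exact pointwise orthogonality annihilates the gyromagnetic term and \cref{lem27} renders the damping term a nonnegative square, so no step-size condition ever enters. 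A secondary, easily handled point is that the $(-\Delta_h)$ pairing is only a seminorm, which is why the mass term must be retained to pin down the otherwise undetermined constant.
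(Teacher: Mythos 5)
Your proof is correct, and it reaches the conclusion by a genuinely more elementary route than the paper. The paper does not argue on the kernel of the linear system directly: it changes variables to $\q_h = -\Delta_h \tilde{\m}_h$, represents $\tilde{\m}_h = (-\Delta_h)^{-1}\q_h + C^*_{\q_h}$ with an explicitly determined constant, rewrites \cref{scheme-alt-1} as the operator equation $G(\q_h)=\0$ in \cref{scheme-alt-2}, proves strict monotonicity of $G$ --- using exactly your two cancellations, $\langle \hat{\m}_h\times\tilde{\q}_h,\tilde{\q}_h\rangle = 0$ and $\langle \hat{\m}_h\times(\hat{\m}_h\times\tilde{\q}_h),\tilde{\q}_h\rangle\le 0$ --- and then invokes the Browder--Minty lemma (\cref{lem:Browder}) to obtain existence and uniqueness simultaneously. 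Your core identity is the paper's monotonicity estimate in disguise: pairing the homogeneous equation against $-\Delta_h\tilde{\m}_h$ is the same computation as pairing $G(\q_{1,h})-G(\q_{2,h})$ against $\tilde{\q}_h = \q_{1,h}-\q_{2,h}$, and your leading term $\frac32\|\nabla_h\tilde{\m}_h\|_2^2$ coincides (up to the factor $k$ by which you multiplied the equation) with the paper's $\frac{3}{2k}\|\tilde{\q}_h\|_{-1}^2$, since $\|\Delta_h\f_h\|_{-1}^2 = \|\nabla_h\f_h\|_2^2$ for $\f_h$ satisfying the discrete Neumann condition. What your version buys: once one observes that the system is linear and square on a finite-dimensional space, injectivity already implies bijectivity, so the monotone-operator machinery, the $(-\Delta_h)^{-1}$ representation, and the bookkeeping of the constant $C^*_{\q_h}$ all become unnecessary; your handling of the constant mode (a constant kernel element must satisfy $\frac32\tilde{\m}_h=\0$) replaces the paper's representation formula, which exists precisely because the $H_h^{-1}$ pairing alone cannot see constants. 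What the paper's framing buys: Browder--Minty is indifferent to linearity, so the identical monotonicity argument would survive if the frozen coefficient $\hat{\m}_h$ were replaced by a quantity depending on $\tilde{\m}_h$ itself, i.e., it is the template for genuinely nonlinear variants of the scheme, and it yields existence without appealing to finite dimensionality. Your closing diagnosis --- that testing with $\tilde{\m}_h$ in plain $\ell^2$ leaves an indefinite gyromagnetic term whose absorption would cost a step-size restriction, while testing with $-\Delta_h\tilde{\m}_h$ annihilates it exactly --- is also the correct explanation of why the result is unconditional, and it mirrors the estimates of $I_1$ and $I_4$ in the paper's convergence proof.
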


\par To facilitate the unique solvability analysis for \cref{scheme-alt-1}, we denote $\q_h = - \Delta_h \tilde{\m}_h$. Note that $\overline{\q_h} =0$, due to the Neumann boundary condition for $\tilde{\m}_h$. Meanwhile, we observe that $\tilde{\m}_h \ne (-\Delta_h)^{-1} \q_h$ in general, since $\overline{\tilde{\m_h}} \ne 0$. Instead, $\tilde{\m}_h$ could be represented as follows:
\begin{equation*}
\tilde{\m}_h = (-\Delta_h)^{-1} \q_h + C_{\q_h}^* \quad \mbox{with} \, \, \,
C_{\q_h}^* = \frac23 \Bigl( \overline{\p_h} + k  \overline{ \hat{\m}_h \times \q_h}
+ \alpha k \overline{\hat{\m}_h \times ( \hat{\m}_h \times \q_h) } \Bigr)
\label{m-representation-1}
\end{equation*}
and $\hat{\m}_h$ given by \cref{cc}. \cref{scheme-alt-1} is then rewritten as
\begin{equation}
G (\q_h) := \frac{\frac32 ( (-\Delta_h)^{-1} \q_h + C_{\q_h}^* ) - \p_h}{k}
- \hat{\m}_h \times \q_h
- \alpha \hat{\m}_h \times ( \hat{\m}_h \times \q_h) = \0.
\label{scheme-alt-2}
\end{equation}

\begin{lemma}[Browder-Minty lemma \cite{browder1963nonlinear,minty1963monotonicity}] \label{lem:Browder}
	Let X be a real, reflexive Banach space and let $T: X \to X'$ (the dual space of $X$) be bounded, continuous,
	coercive (i.e., $\frac{ (T (u) , u ) }{ \| u \|_X } \to +\infty$, as $\| u \|_X \to +\infty$) and monotone.
	Then for any $g \in X'$ there exists a solution $u \in X$ of the equation $T (u) =g$.
	
	Furthermore, if the operator $T$ is strictly monotone, then the solution $u$ is unique.
\end{lemma}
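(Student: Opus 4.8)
The plan is to establish surjectivity of $T$ by the Galerkin (finite-dimensional approximation) method, to pass to the weak limit using the monotonicity device of Minty, and then to read off uniqueness directly from strict monotonicity. First I would reduce to the case $g = 0$: replacing $T$ by $T - g$ leaves boundedness, continuity, and monotonicity untouched, while coercivity survives because $(T(u) - g, u)/\|u\|_X \ge (T(u),u)/\|u\|_X - \|g\|_{X'}$ still tends to $+\infty$ as $\|u\|_X \to +\infty$. Hence it suffices to produce $u \in X$ with $T(u) = 0$.

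Next I would set up the Galerkin scheme. Choosing an increasing sequence of finite-dimensional subspaces $V_n = \mathrm{span}\{w_1,\dots,w_n\}$ with $\overline{\bigcup_n V_n} = X$ (valid for separable $X$; in the general reflexive case one replaces the sequence by a subnet indexed over the directed family of all finite-dimensional subspaces), I would seek $u_n \in V_n$ solving the discrete problem $(T(u_n), v) = 0$ for all $v \in V_n$. Writing $u = \sum_i c_i w_i$ and $P(c) = \big((T(u), w_i)\big)_{i=1}^n$, the map $P : \mathbb{R}^n \to \mathbb{R}^n$ is continuous because $T$ is continuous, and coercivity forces $P(c)\cdot c = (T(u),u) > 0$ once $\|u\|_X = R$ is large enough. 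The standard corollary of Brouwer's fixed-point theorem (a continuous vector field pointing outward on a sphere must vanish somewhere inside the ball) then yields a zero $u_n$. Coercivity applied to $(T(u_n), u_n) = 0$ gives a uniform bound $\|u_n\|_X \le R_0$, and since $T$ is bounded, $\{T(u_n)\}$ is bounded in $X'$. By reflexivity of $X$ (hence of $X'$) I extract a subsequence with $u_{n_k} \rightharpoonup u$ in $X$ and $T(u_{n_k}) \rightharpoonup \chi$ in $X'$.

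The heart of the argument, and the step I expect to be the \emph{main obstacle}, is Minty's trick. First, testing the Galerkin equation against each fixed basis vector and passing to the weak limit shows $(\chi, w_i) = 0$ for every $i$, hence $\chi = 0$ by density. Then, for arbitrary $w \in X$, monotonicity gives $(T(u_{n_k}) - T(w),\, u_{n_k} - w) \ge 0$; expanding and using $(T(u_{n_k}), u_{n_k}) = 0$, $T(u_{n_k}) \rightharpoonup 0$, and $u_{n_k} \rightharpoonup u$, the limit reads $(T(w),\, w - u) \ge 0$ for all $w \in X$. The delicate point is that weak convergence of $u_{n_k}$ does \emph{not} yield $T(u_{n_k}) \rightharpoonup T(u)$ (since $T$ need not be weakly continuous), so one cannot simply set $w = u$; monotonicity is exactly what circumvents this. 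Instead I would substitute $w = u + t z$ with $t > 0$ and arbitrary $z \in X$, divide by $t$ to obtain $(T(u + tz), z) \ge 0$, and let $t \to 0^+$. Here continuity (hemicontinuity along the segment suffices) gives $(T(u), z) \ge 0$; replacing $z$ by $-z$ forces $(T(u), z) = 0$ for all $z \in X$, that is $T(u) = 0 = g$, completing the existence part.

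Finally, uniqueness under strict monotonicity is immediate: if $T(u_1) = T(u_2) = g$, then $(T(u_1) - T(u_2),\, u_1 - u_2) = 0$, which by strict monotonicity forces $u_1 = u_2$. The technical care in this proof concentrates in two places: ensuring the outward-pointing condition on a large sphere for the Brouwer-based finite-dimensional existence, and, above all, the Minty limiting argument, where monotonicity is what permits passage to the weak limit despite the absence of weak continuity of $T$.
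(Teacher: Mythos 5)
Your proposal is correct, but there is nothing in the paper to compare it against: the paper states the Browder--Minty lemma as a classical black-box result, citing Browder (1963) and Minty (1963), and supplies no proof, using the lemma only to conclude the unique solvability of the scheme via the monotonicity estimate for the operator $G$ in \cref{scheme-alt-2}. Your argument is precisely the standard proof from those original references (and from textbooks such as Zeidler or Showalter): reduction to $g=0$ by replacing $T$ with $T-g$ (your check that coercivity survives, via $(T(u)-g,u)/\|u\|_X \ge (T(u),u)/\|u\|_X - \|g\|_{X'}$, is right); Galerkin approximation with the Brouwer-based zero of an outward-pointing vector field; the uniform bound $\|u_n\|_X \le R_0$ from coercivity and $(T(u_n),u_n)=0$; weak compactness from reflexivity; and Minty's trick, where you correctly identify the crux --- $T$ need not be weakly continuous, so one cannot pass $T(u_{n_k}) \rightharpoonup T(u)$ directly, and the hemicontinuity step with $w = u + tz$, $t \to 0^+$, is what closes the argument (legitimate here since the paper's hypothesis of continuity is stronger than the hemicontinuity you actually use). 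The uniqueness part from strict monotonicity is immediate and correct. Two small points of care, both of which you at least implicitly handle: in the non-separable reflexive case the sequence of subspaces must be replaced by the directed family of all finite-dimensional subspaces with a subnet extraction, as you note; and in the Brouwer step the coercivity sphere $\|u\|_X = R$ must be translated to a Euclidean coefficient sphere, which is harmless by equivalence of norms on $V_n$ but deserves a sentence in a fully written version.
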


Then we proceed into the proof of \cref{thm:solvability}.

\begin{proof}
	Recall that~\cref{scheme-alt-1} is equivalent to~\cref{scheme-alt-2}.
	For any $\q_{1,h}$, $\q_{2,h}$ with $\overline{\q_{1,h}} = \overline{\q_{2,h}}=0$,
	we denote $\tilde{\q}_h = \q_{1,h} - \q_{2,h}$ and derive the following monotonicity estimate:
	\begin{align*}\label{solvability-1}
	&\langle G (\q_{1,h}) - G (\q_{2,h}) , \q_{1,h} - \q_{2,h} \rangle \\
	&=\frac{3}{2 k} \Bigl(
	\langle (-\Delta_h)^{-1} \tilde{\q}_h , \tilde{\q}_h \rangle
	+ \langle C_{\q_{1,h}}^* - C_{\q_{1,h}}^* , \tilde{\q}_h \rangle \Bigr)\nonumber\\
	& - \langle \hat{\m}_h \times \tilde{\q}_h , \tilde{\q}_h \rangle
	- \alpha \langle\hat{\m}_h \times ( \hat{\m}_h \times \tilde{\q}_h ) ,
	\tilde{\q}_h \rangle\nonumber\\
	&\ge \frac{3}{2 k} \Bigl(
	\langle (-\Delta_h)^{-1} \tilde{\q}_h , \tilde{\q}_h \rangle
	+ \langle C_{\q_{1,h}}^* - C_{\q_{2,h}}^* , \tilde{\q}_h \rangle \Bigr) \nonumber\\
	&= \frac{3}{2 k}
	\langle (-\Delta_h)^{-1} \tilde{\q}_h , \tilde{\q}_h \rangle
	= \frac{3}{2 k} \| \tilde{\q}_h \|_{-1}^2 \ge 0 . \nonumber
	\end{align*}
	Note that the following equality and inequality have been applied in the second step:
	\begin{align*} 
	\langle \hat{\m}_h \times \tilde{\q}_h , \tilde{\q}_h \rangle  & = 0 , \quad
	\langle \hat{\m}_h \times ( \hat{\m}_h \times \tilde{\q}_h ), \tilde{\q}_h \rangle \le 0 .
	\end{align*}
	The third step is based on the fact that both $C_{\q_{1,h}}^*$ and $C_{\q_{2,h}}^*$ are constants, and $\overline{\q_{1,h}} = \overline{\q_{2,h}}=0$, so that $\langle C_{\q_{1,h}}^* - C_{\q_{2,h}}^* , \tilde{\q}_h \rangle = 0$.
	
	Moreover, for any $\q_{1,h}$, $\q_{2,h}$ with $\overline{\q_{1,h}} = \overline{\q_{2,h}}=0$, we get
	\begin{eqnarray*}
		\langle G (\q_{1,h}) - G (\q_{2,h}) , \q_{1,h} - \q_{2,h} \rangle
		\ge \frac{3}{2 k} \| \tilde{\q}_h \|_{-1}^2 > 0 ,  \quad
		\mbox{if $\q_{1,h} \ne \q_{2,h}$} , \label{solvability-3}
	\end{eqnarray*}
	and the equality only holds when $\q_{1,h} = \q_{2,h}$.
	
	Therefore, an application of \cref{lem:Browder} implies a unique solution of both \cref{scheme-alt-2} and  \cref{scheme-alt-1},
	which completes the proof of \cref{thm:solvability}.
\end{proof}

The second theoretical result is the optimal rate convergence analysis. 

\begin{thm}\label{cccthm2} Let $\m_e \in C^3 ([0,T]; C^0) \cap L^{\infty}([0,T]; C^4)$ be a smooth solution of \cref{c1} with the initial data $\m_e ({\x},0)=\m_e ^0({\x})$ and ${\m}_h$ be the numerical solution of the equation~\cref{scheme-1-1}-\cref{scheme-1-2} with the initial data ${\m}_h^0=\m_{e,_h}^0$ and $\m_h^1= \m _{e,h}^1$. Suppose that the initial error satisfies $\|\m_{e,h}^\ell - \m_h^\ell \|_2 +\|\nabla_h ( \m_{e,h}^\ell - \m_h^\ell ) \|_2 = \mathcal{O} (k^2 + h^2),\,\ell=0,1$, and $k\leq \mathcal{C}h$. Then the following convergence result holds as $h$ and $k$ goes to zero:
	\begin{align}
	\| \m_{e,h}^n - \m_h^n \|_{2}+ \|\nabla_h ( \m_{e,h}^n - \m_h^n ) \|_{2} &\leq \mathcal{C}(k^2+h^2) , \quad \forall n \ge 2 ,
	\end{align}	
	in which the constant $\mathcal{C}>0$ is independent of $k$ and $h$.
\end{thm}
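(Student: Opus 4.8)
The plan is to close a single induction over the time index $n$ in which each step carries two coupled facts simultaneously: an optimal $H_h^1$ error bound of order $\mathcal{O}(k^2+h^2)$ for the pre-projection solution, and the a priori $W_h^{1,\infty}$ bounds required by \cref{lem 6-0}. The two "approximated solutions" are the intermediate field $\tilde{\m}_h$ from \cref{scheme-1-1} and the projected field $\m_h$ from \cref{scheme-1-2}, both measured against a single constructed reference profile $\um_h = \m_e + h^2 \m^{(1)}$. First I would carry out a consistency analysis: the spatial correction $\m^{(1)}$ is chosen so that $\um_h$ satisfies the discrete boundary condition \cref{BC-1} and the time-marching scheme \cref{scheme-1-1} up to a vector residual $\bm{\tau}_h^{n+2}$ with $\|\bm{\tau}_h^{n+2}\|_2 + \|\nabla_h\bm{\tau}_h^{n+2}\|_2 = \mathcal{O}(k^2+h^2)$. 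This combines Taylor expansion of the BDF2 stencil (controlled by $\m_e\in C^3([0,T];C^0)$), of the one-sided extrapolation $\hat{\um}_h^{n+2}=2\um_h^{n+1}-\um_h^n$ (also $\mathcal{O}(k^2)$), and of $\Delta_h,\nabla_h$ (controlled by $\m_e\in L^\infty([0,T];C^4)$); the purpose of $h^2\m^{(1)}$ is to repair the near-boundary accuracy loss from the ghost-point formula so the residual is genuinely second order in $H_h^1$, not merely in $\|\cdot\|_2$.

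Next I set $\tilde{\e}_h^{\,n}=\tilde{\m}_h^n-\um_h^n$, $\e_h^n=\m_h^n-\um_h^n$, and $\hat{\e}_h^{\,n+2}=2\e_h^{n+1}-\e_h^n$. Subtracting the scheme satisfied by $\um_h$ from \cref{scheme-1-1} produces an error evolution whose leading part is $-\hat{\m}_h^{n+2}\times\Delta_h\tilde{\e}_h^{\,n+2}-\alpha\,\hat{\m}_h^{n+2}\times(\hat{\m}_h^{n+2}\times\Delta_h\tilde{\e}_h^{\,n+2})$, plus commutator terms carrying the factor $\hat{\e}_h^{\,n+2}$ and the residual $\bm{\tau}_h^{n+2}$. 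I would test this identity with $\tilde{\e}_h^{\,n+2}$ (for the $\|\cdot\|_2$ estimate) and with $-\Delta_h\tilde{\e}_h^{\,n+2}$ (for the $\|\nabla_h\cdot\|_2$ estimate). The temporal term yields the standard BDF2 G-stability identity, telescoping an energy of the form $\|\tilde{\e}_h^{\,n+2}\|_2^2+\|2\tilde{\e}_h^{\,n+2}-\tilde{\e}_h^{\,n+1}\|_2^2$ and its gradient analogue through \cref{summation}. In the $H_h^1$ test the gyromagnetic leading term vanishes pointwise since $\langle\hat{\m}_h\times\Delta_h\tilde{\e}_h,\Delta_h\tilde{\e}_h\rangle=0$, while the damping leading term contributes the nonnegative dissipation $\alpha\|\hat{\m}_h\times\Delta_h\tilde{\e}_h^{\,n+2}\|_2^2$ by \cref{lem27_3}. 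Every remaining term contains a factor $\Delta_h\tilde{\e}_h^{\,n+2}$ that the energy cannot control; the crucial move is to apply \cref{summation} to transfer one difference onto the smooth profile $\um_h$ (with discrete derivatives up to third order bounded) and onto $\hat{\e}_h^{\,n+2}$, reducing all such terms to $\|\nabla_h\tilde{\e}_h^{\,n+2}\|_2$, $\|\hat{\e}_h^{\,n+2}\|_2$ and $\|\nabla_h\hat{\e}_h^{\,n+2}\|_2$, which are bounded via \cref{lem27_1} and the a priori bounds \cref{lem 6-1}. The same summation-by-parts treatment tames the gyromagnetic and damping couplings in the $\|\cdot\|_2$ test. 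After Cauchy--Schwarz and Young's inequalities, \cref{ccclem1} closes the combined bound $\|\tilde{\e}_h^{\,n+2}\|_2 + \|\nabla_h\tilde{\e}_h^{\,n+2}\|_2 = \mathcal{O}(k^2+h^2)$, and \cref{lem 6-2} then transfers it to $\|\e_h^{n+2}\|_2 + \|\nabla_h\e_h^{n+2}\|_2 = \mathcal{O}(k^2+h^2)$.

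To close the induction I must recover the a priori bounds at level $n+2$ from the $H_h^1$ estimate just obtained. The inverse inequality \cref{ccclemC1} gives $\|\nabla_h\e_h^{n+2}\|_\infty \le h^{-d/2}\|\nabla_h\e_h^{n+2}\|_2 \le \mathcal{C}\,h^{-d/2}(k^2+h^2)$, which under $k\le\mathcal{C}h$ and $d\le 3$ is $\mathcal{O}(h^{2-d/2})=o(1)$; hence $\|\m_h^{n+2}\|_\infty+\|\nabla_h\m_h^{n+2}\|_\infty \le \|\um_h^{n+2}\|_\infty+\|\nabla_h\um_h^{n+2}\|_\infty+o(1)\le M$, and the same argument applied to $\tilde{\e}_h^{\,n+2}$ reproduces $|\tilde{\m}_h^{n+2}|\ge\tfrac12$ of \cref{lem 6-1-0} and the bound on $\tilde{\m}_h^{n+2}$, re-establishing the hypotheses of \cref{lem 6-0} at the new step. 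The base cases $n=0,1$ are supplied by the assumed initial accuracy together with the first-order semi-implicit startup noted in the remark following the scheme. The main obstacle is precisely this bootstrap: the $W_h^{1,\infty}$ bound is indispensable for controlling the cross-product nonlinearities, yet it is recoverable only from the $H_h^1$ estimate, since a bare $\|\cdot\|_2$ estimate would leave $\|\nabla_h\e_h\|_\infty\le h^{-d/2}\|\nabla_h\e_h\|_2$ without gradient control and the loop would not close. Two further delicate points sit inside the loop, namely removing the uncontrollable $\Delta_h\tilde{\e}_h$ factors by summation by parts without degrading the order of the residual, and passing the pre-projection $H_h^1$ bound through the nonlinear projection \cref{scheme-1-2}, which is exactly what \cref{lem 6-0} achieves and what forces the two-profile comparison against $\um_h$.
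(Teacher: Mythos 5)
Your proposal is correct and follows essentially the same route as the paper's proof: the same corrected profile $\um = \m_e + h^2\m^{(1)}$ repairing the ghost-point extrapolation, the same two error functions $\tilde{\e}_h$, $\e_h$ tested against $\tilde{\e}_h^{\,\ell+2}$ and $-\Delta_h\tilde{\e}_h^{\,\ell+2}$ with BDF2 telescoping, the vanishing gyromagnetic term and sign-definite damping term in the gradient estimate, summation by parts for the commutator terms, \cref{lem 6-0} at the projection step, Gronwall, and the inverse-inequality bootstrap under $k\le\mathcal{C}h$ to recover the $W_h^{1,\infty}$ assumption. Your explicit requirement that the residual also satisfy a gradient bound $\|\nabla_h\bm{\tau}^{n+2}\|_2=\mathcal{O}(k^2+h^2)$ is a point the paper leaves implicit in its estimate of $\langle -\Delta_h\tilde{\e}_h^{\,\ell+2},\tau^{\ell+2}\rangle$, but it is the same argument, not a different one.
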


\begin{proof}
	First, we construct an approximate solution $\underline{\m}$:
	\begin{equation}
	\underline{\m} = \m_e + h^2 \m^{(1)} ,  \label{exact-1}
	\end{equation}
	in which the auxiliary field $\m^{(1)}$ satisfies the following Poisson equation
	\begin{align}  \label{exact-2}
	& \Delta \m^{(1)} = \hat{C}  \quad \mbox{with} \, \, \,
	\hat{C} = \frac{1}{| \Omega|} \int_{\partial \Omega} \,
	\partial_{\boldmath \nu}^3 \m_e \, \textrm{d} s ,  \\
	& \partial_z \m^{(1)} \mid_{z=0} = - \frac{1}{24} \partial_z^3 \m_e \mid_{z=0} ,  \quad
	\partial_z \m^{(1)} \mid_{z=1} = \frac{1}{24} \partial_z^3 \m_e \mid_{z=1} ,  \nonumber
	\end{align}
	with boundary conditions along $x$ and $y$ directions defined in a similar way.
	
	The purpose of such a construction will be illustrated later. Then we extend the approximate profile $\underline{\m}$ to the numerical ``ghost" points, according to the extrapolation formula~\cref{BC-1}:
	\begin{equation}
	\underline{\m}_{i,j,0}= \underline{\m}_{i,j,1} , \quad
	\underline{\m}_{i,j,N_z+1} = \underline{\m}_{i,j,N_z} ,  \label{exact-3}
	\end{equation}
	and the extrapolation for other boundaries can be formulated in the same manner. Subsequently, we prove that such an extrapolation yields a higher order $\mathcal{O}(h^5)$ approximation, instead of the standard $\mathcal{O}(h^3)$ accuracy. Also see the related works~\cite{STWW2003, Wang2000, Wang2004} in the existing literature.
	
	Performing a careful Taylor expansion for the exact solution around the boundary section $z=0$, combined with the mesh point values: $\hat{z}_0 = - \frac12 h$, $\hat{z}_1 = \frac12 h$, we get
	\begin{align}
	\m_e  (\hat{x}_i, \hat{y}_j, \hat{z}_0 )
	&= \m_e (\hat{x}_i, \hat{y}_j, \hat{z}_1 )
	- h \partial_z \m_e (\hat{x}_i, \hat{y}_j, 0 )
	- \frac{h^3}{24} \partial_z^3 \m_e (\hat{x}_i, \hat{y}_j, 0 )
	+   \mathcal{O}(h^5) \nonumber
	\\
	&= \m_e (\hat{x}_i, \hat{y}_j, \hat{z}_1 )
	- \frac{h^3}{24} \partial_z^3 \m_e (\hat{x}_i, \hat{y}_j, 0 )
	+   \mathcal{O}(h^5) ,  \label{exact-4}
	\end{align}
	in which the homogenous boundary condition has been applied in the second step. A similar Taylor expansion for the constructed profile $\m^{(1)}$ reveals that
	\begin{align}
	\m^{(1)}  (\hat{x}_i, \hat{y}_j, \hat{z}_0 )
	&= \m^{(1)} (\hat{x}_i, \hat{y}_j, \hat{z}_1 )
	- h \partial_z \m^{(1)} (\hat{x}_i, \hat{y}_j, 0 )
	+   \mathcal{O}(h^3) \nonumber
	\\
	&= \m^{(1)} (\hat{x}_i, \hat{y}_j, \hat{z}_1 )
	+ \frac{h}{24} \partial_z^3 \m_e (\hat{x}_i, \hat{y}_j, 0 )
	+   \mathcal{O}(h^3)  \label{exact-5}
	\end{align}
	with the boundary condition in~\cref{exact-2} applied. In turn, a substitution of~\cref{exact-4}-\cref{exact-5} into~\cref{exact-1} indicates that
	\begin{equation}
	\underline{\m}  (\hat{x}_i, \hat{y}_j, \hat{z}_0 )
	= \underline{\m} (\hat{x}_i, \hat{y}_j, \hat{z}_1 )
	+   \mathcal{O}(h^5) .   \label{exact-6}
	\end{equation}
	In other words, the extrapolation formula~\cref{exact-3} is indeed $\mathcal{O}(h^5)$ accurate.
	
	As a result of the boundary extrapolation estimate~\cref{exact-6}, we see that the discrete Laplacian of $\underline{\m}$ yields the second-order accuracy, even at the mesh points around the boundary sections:
	\begin{equation}
	\Delta_h \underline{\m}_{i,j,k} = \Delta \m_e (\hat{x}_i, \hat{y}_j, \hat{z}_k )
	+   \mathcal{O}(h^2) , \quad \forall 1 \le i,j,k \le N .  \label{consistency-1}
	\end{equation}
	Moreover, a detailed calculation of Taylor expansion, in both time and space, leads to the following truncation error estimate:
	\begin{align} \label{consistency-2}
	\frac{\frac{3}{2} \underline{\m}_h^{n+2} - 2\underline{\m}_h^{n+1}
		+ \frac{1}{2} \underline{\m}_h^n}{k} &= -\left( 2 \underline{\m}_h^{n+1} - \underline{\m}_h^n\right) \times \Delta_h \underline{\m}_h^{n+2} + \tau^{n+2} \\
	&-\alpha\left( 2 \underline{\m}_h^{n+1} - \underline{\m}_h^n\right) \times \left(
	( 2 \underline{\m}_h^{n+1} - \underline{\m}_h^n) \times \Delta_h \underline{\m}_h^{n+2} \right) , \nonumber
	\end{align}
	with $\| \tau^{n+2} \|_2 \le \mathcal{C} (k^2+h^2)$. Meanwhile, we introduce the numerical error functions $\tilde{\e}_h^n=\underline{\m}_h^n-\tilde{\m}_h^n$, ${\e}_h^n=\underline{\m}_h^n-\m_h^n$, at a point-wise level.
	In other words, instead of a direct comparison between the numerical solution and the exact solution,
	we analyze the error function between the numerical solution and the constructed solution ${\um}_h$, due to its higher order consistency estimate~\cref{exact-6} around the boundary. A subtraction of \cref{scheme-1-1}-\cref{scheme-1-2} from the consistency estimate~\cref{consistency-2} leads to
	the error function evolution system:
	\begin{align}\label{ccc73}
	\frac{\frac{3}{2}\tilde{\e}_h^{n+2}-2\tilde{\e}_h^{n+1}+\frac{1}{2}\tilde{\e}_h^n}{k} &= -\left( 2{\m}_h^{n+1}-{\m}_h^n\right)\times\Delta_h\tilde{\e}_h^{n+2}-\left( 2{\e}_h^{n+1}-{\e}_h^n\right)\times\Delta_h {\um}_h^{n+2}\\
	&- \alpha\left( 2{\m}_h^{n+1}-{\m}_h^n\right)\times\left( (2{\m}_h^{n+1}-{\m}_h^n)\times\Delta_h\tilde{\e}_h^{n+2}\right)\nonumber\\
	&- \alpha\left( 2{\m}_h^{n+1}-{\m}_h^n\right)\times\left( (2{\e}_h^{n+1}-{\e}_h^n)\times\Delta_h {\um}_h^{n+2} \right)\nonumber\\
	&- \alpha\left( 2{\e}_h^{n+1}-{\e}_h^n\right)\times\left( (2{\um}_h^{n+1}-{\um}_h^n)\times\Delta_h {\um}_h^{n+2}\right)+\tau^{n+2} . \nonumber
	\end{align}
	
	Before we proceed into the formal error estimate, we establish the bound for the constructed approximate solution ${\um}$ and the numerical solution $\m_h$. For the approximate profile $\um \in L^{\infty}([0,T],C^4)$, which turns out to be the exact solution and an $\mathcal{O} (h^2)$ correction term, we still use $\mathcal{C}$ to denote its bound:
	\begin{align}
	\|\nabla_h^r\um\|_{\infty}\leq \mathcal{C}, \quad r=0,1,2,3.  \label{bound-1}
	\end{align}
	In addition, we make the following \textit{a priori} assumption for the numerical error function:
	\begin{equation} \label{bound-2}
	\|{\e}_h^k \|_{\infty}+\|\nabla_h{\e}_h^k \|_{\infty} \le \frac13 ,  \, \, \,
	\| \tilde{\e}_h^k \|_{\infty}+\|\nabla_h \tilde{\e}_h^k \|_{\infty} \le \frac13 ,
	\quad \mbox{for $k= \ell, \ell+1$} .
	\end{equation}
	Such an assumption will be recovered by the convergence analysis at time step $t^{\ell+2}$. In turn, an application of triangle inequality yields the desired $W_h^{1,\infty}$ bound for the numerical solutions $\m_h$ and $\tilde{\m}_h$:
	\begin{align}
	\|{\m}_h^k \|_{\infty} &= \|\um_h^k -{\e}_h^k \|_{\infty}\leq\|\um_h^k \|_{\infty}+\|{\e}_h^k \|_{\infty}\leq \mathcal{C}+ \frac13 ,  \label{bound-3} \\
	\|\nabla_h{\m}_h^k \|_{\infty} &= \|\nabla_h\um_h^k -\nabla_h{\e}_h^k \|_{\infty}\leq\|\nabla_h\um_h^k \|_{\infty}+\|\nabla_h{\e}_h^k \|_{\infty}\leq \mathcal{C}+\frac13 , \nonumber \\
	\| \tilde{\m}_h^k \|_{\infty} & \leq \mathcal{C}+ \frac13 , \quad
	\| \nabla_h \tilde{\m}_h^k \|_{\infty} \leq \mathcal{C}+ \frac13
	\quad \mbox{(similar derivation)} . \label{bound-4}  			
	\end{align}
	
	Then we perform a discrete $L^2$ error estimate at $t^{\ell+2}$ using the mathematical induction. 
	By taking a discrete inner product with the numerical error equation \cref{ccc73} by $\tilde{\e}_h^{\ell+2}$ gives that
	\begin{align}\label{rhs}
	R.H.S.&= \left\langle-\left( 2{\m}_h^{\ell+1}-{\m}_h^\ell\right)\times\Delta_h\tilde{\e}_h^{\ell+2},\tilde{\e}_h^{\ell+2} \right\rangle\\
	&-\left\langle\left( 2{\e}_h^{\ell+1}-{\e}_h^\ell\right)\times\Delta_h\underline{\m}_h^{\ell+2}, \tilde{\e}_h^{\ell+2}\right\rangle +\left\langle\tau^{\ell+2},\tilde{\e}_h^{\ell+2} \right\rangle \nonumber\\
	&-\alpha \left\langle \left( 2{\m}_h^{\ell+1}-{\m}_h^\ell\right)\times\left( (2{\m}_h^{\ell+1}-{\m}_h^\ell)\times\Delta_h\tilde{\e}_h^{\ell+2}\right),\tilde{\e}_h^{\ell+2}\right\rangle\nonumber\\
	&-\alpha \left\langle\left( 2{\m}_h^{\ell+1}-{\m}_h^\ell\right)\times\left( (2{\e}_h^{\ell+1}-{\e}_h^\ell)\times\Delta_h\underline{\m}_h^{\ell+2} \right),\tilde{\e}_h^{\ell+2} \right\rangle\nonumber\\
	&-\alpha \left\langle \left( 2{\e}_h^{\ell+1}-{\e}_h^\ell\right)\times\left( (2\underline{\m}_h^{\ell+1}-\underline{\m}_h^\ell)\times\Delta_h\underline{\m}_h^{\ell+2}\right),\tilde{\e}_h^{\ell+2}\right\rangle\nonumber\\
	&=: \tilde{I}_1+\tilde{I}_2+\tilde{I}_3+\tilde{I}_4+\tilde{I}_5+\tilde{I}_6.\nonumber
	\end{align}
	
	\begin{itemize}
		\item Estimate of $\tilde{I}_1$: A combination of the summation by parts formula~\cref{sum1} (notice that the numerical error function $\tilde{\e}$ satisfies the homogeneous Neumann boundary condition~\cref{BC-1}) and inequality~\cref{lem27_1} results in 	
		\begin{align}\label{I1}
		\tilde{I}_1 = \, &\left\langle-\left( 2{\m}_h^{\ell+1}-{\m}_h^\ell\right)\times\Delta_h\tilde{\e}_h^{\ell+2},\tilde{\e}_h^{\ell+2} \right\rangle\\
		= \, & \left\langle \tilde{\e}_h^{\ell+2}\times\left( 2{\m}_h^{\ell+1}-{\m}_h^\ell\right), -\Delta_h\tilde{\e}_h^{\ell+2}\right\rangle\nonumber\\
		= \, & \left\langle \nabla_h\Big[\tilde{\e}_h^{\ell+2}\times\left( 2{\m}_h^{\ell+1}-{\m}_h^\ell\right)\Big] , \nabla_h\tilde{\e}_h^{\ell+2}\right\rangle \nonumber\\
		\le \, & \mathcal{C}\Big(\|\nabla_h\tilde{\e}_h^{\ell+2}\|_2^2+\|\nabla_h\tilde{\e}_h^{\ell+2}\|_2^2\cdot \|2{\m}_h^{\ell+1}-{\m}_h^\ell\|_{\infty}^2 \nonumber \\
		& +\|\tilde{\e}_h^{\ell+2}\|_2^2\cdot \|\nabla_h(2{\m}_h^{\ell+1}-{\m}_h^\ell)\|_{\infty}^2\Big) \nonumber\\
		\le \, & \mathcal{C} ( \|\nabla_h\tilde{\e}_h^{\ell+2}\|_2^2  + \|\tilde{\e}_h^{\ell+2}\|_2^2 ) . \nonumber
		\end{align}
		\item Estimate of $\tilde{I}_2$:
		\begin{align}\label{I2}
		\tilde{I}_2 =\, & -\left\langle\left( 2{\e}_h^{\ell+1}-{\e}_h^\ell\right)\times\Delta_h\um_h^{\ell+2}, \tilde{\e}_h^{\ell+2}\right\rangle \\
		\le \, & \frac{1}{2}\big[\|\tilde{\e}_h^{\ell+2}\|_2^2+\|2{\e}_h^{\ell+1}-{\e}_h^\ell\|_2^2\cdot \|\Delta_h\um_h^{\ell+2}\|_{\infty}^2 \big]\nonumber\\
		\le \, & \mathcal{C} ( \|\tilde{\e}_h^{\ell+2}\|_2^2 + \| \e_h^{\ell+1}\|_2^2
		+ \| \e_h^{\ell}\|_2^2 ) . \nonumber
		\end{align}
		
		\item Estimate of the truncation error term $\tilde{I}_3$: An application of Cauchy inequality gives
		\begin{align}\label{I3}
		\tilde{I}_3 = \left\langle\tau^{\ell+2},\tilde{\e}_h^{\ell+2} \right\rangle \leq \mathcal{C}\|\tilde{\e}_h^{\ell+2}\|_2^2+\mathcal{C}(k^4+h^4).
		\end{align}
		
		\item Estimate of $\tilde{I}_4$: It follows from \cref{lem27_2} in \cref{lem27} that
		\begin{align}\label{I4}
		\tilde{I}_4 = \,&-\alpha \left\langle \left( 2{\m}_h^{\ell+1}-{\m}_h^\ell\right)\times\left( (2{\m}_h^{\ell+1}-{\m}_h^\ell)\times\Delta_h\tilde{\e}_h^{\ell+2}\right),\tilde{\e}_h^{\ell+2}\right\rangle\\
		= \, & \alpha \left\langle \left( (2{\m}_h^{\ell+1}-{\m}_h^\ell)\times\Delta_h\tilde{\e}_h^{\ell+2}\right)\times\left( 2{\m}_h^{\ell+1}-{\m}_h^\ell\right),\tilde{\e}_h^{\ell+2}\right\rangle\nonumber\\
		= \, & \alpha \left\langle (2{\m}_h^{\ell+1}-{\m}_h^\ell)\times\big[\tilde{\e}_h^{\ell+2}\times (2{\m}_h^{\ell+1}-{\m}_h^\ell)\big],\Delta_h\tilde{\e}_h^{\ell+2} \right\rangle\nonumber\\
		= \, & \alpha \left\langle \nabla_h\Big((2{\m}_h^{\ell+1}-{\m}_h^\ell)\times\big[\tilde{\e}_h^{\ell+2}\times (2{\m}_h^{\ell+1}-{\m}_h^\ell)\big]\Big),\nabla_h\tilde{\e}_h^{\ell+2} \right\rangle \nonumber\\
		\le \, & \mathcal{C} \Big( \|\nabla_h\tilde{\e}_h^{\ell+2}\|_2^2+\|\nabla_h(2{\m}_h^{\ell+1}-{\m}_h^\ell)\|_{\infty}^2\cdot \|\tilde{\e}_h^{\ell+2}\|_2^2\cdot \|2{\m}_h^{\ell+1}-{\m}_h^\ell\|_{\infty}^2\nonumber\\
		&+\|2{\m}_h^{\ell+1}-{\m}_h^\ell\|_{\infty}^2\cdot \|\nabla_h \tilde{\e}_h^{\ell+2} \|_2^2\cdot \|2{\m}_h^{\ell+1}-{\m}_h^\ell\|_{\infty}^2\nonumber\\
		&+\|2{\m}_h^{\ell+1}-{\m}_h^\ell\|_{\infty}^2\cdot \|\tilde{\e}_h^{\ell+2}\|_2^2\cdot \|\nabla_h(2{\m}_h^{\ell+1}-{\m}_h^\ell\|_{\infty}^2)\Big) \nonumber\\
		\le \, & \mathcal{C} ( \| \nabla_h \tilde{\e}_h^{\ell+2}\|_2^2
		+ \|\tilde{\e}_h^{\ell+2}\|_2^2 ) . \nonumber
		\end{align}
		\item Estimates of $\tilde{I}_5$ and $\tilde{I}_6$:
		\begin{align}\label{I5}
		\tilde{I}_5 = \,&-\alpha \left\langle\left( 2{\m}_h^{\ell+1}-{\m}_h^\ell\right)\times\left( (2{\e}_h^{\ell+1}-{\e}_h^\ell)\times\Delta_h\um_h^{\ell+2} \right),\tilde{\e}_h^{\ell+2} \right\rangle\\
		\le \, & \frac{\alpha}{2}\Big( \|\tilde{\e}_h^{\ell+2}\|_2^2+\|2{\m}_h^{\ell+1}-{\m}_h^\ell\|_{\infty}^2\cdot \|2{\e}_h^{\ell+1}-{\e}_h^\ell\|_2^2\cdot \|\Delta_h\um_h^{\ell+2}\|_{\infty}^2 \Big) \nonumber\\
		\le \, & \mathcal{C} ( \|\tilde{\e}_h^{\ell+2}\|_2^2 + \| \e_h^{\ell+1}\|_2^2
		+ \| \e_h^{\ell}\|_2^2 ) . \nonumber
		\end{align}
		
		\begin{align}\label{I6}
		\tilde{I}_6 =\,&-\alpha \left\langle \left( 2{\e}_h^{\ell+1}-{\e}_h^\ell\right)\times\left( (2\um_h^{\ell+1}-\um_h^\ell)\times\Delta_h\um_h^{\ell+2}\right),\tilde{\e}_h^{\ell+2}\right\rangle\\
		\le \, & \frac{\alpha}{2}\Big( \|\tilde{\e}_h^{\ell+2}\|_2^2+\|2{\e}_h^{\ell+1}-{\e}_h^\ell\|_2^2\cdot \|2\um_h^{\ell+1}-\um_h^\ell\|_{\infty}^2\cdot \|\Delta_h\um_h^{\ell+2}\|_{\infty}^2\Big) \nonumber\\
		\le \, & \mathcal{C} ( \|\tilde{\e}_h^{\ell+2}\|_2^2 + \| \e_h^{\ell+1}\|_2^2
		+ \| \e_h^{\ell}\|_2^2 ) . \nonumber
		\end{align}
		
	\end{itemize}	
	Meanwhile, the inner product of the left hand side of \cref{ccc73} with $\tilde{\e}_h^{\ell+2}$ turns out to be	
	\begin{align*}
	L.H.S.&=\frac{1}{4k}\big(\|\tilde{\e}_h^{\ell+2}\|_2^2-\|\tilde{\e}_h^{\ell+1}\|_2^2+\|2\tilde{\e}_h^{\ell+2}-\tilde{\e}_h^{\ell+1}\|_2^2-\|2\tilde{\e}_h^{\ell+1}-\tilde{\e}_h^\ell\|_2^2\\
	&+\|\tilde{\e}_h^{\ell+2}-2\tilde{\e}_h^{\ell+1}+\tilde{\e}_h^\ell\|_2^2\big).\nonumber
	\end{align*}
	Its combination with \cref{I1,I2,I3,I4,I5,I6} and \cref{rhs} leads to
	\begin{align}\label{ccc34}
	&\|\tilde{\e}_h^{\ell+2}\|_2^2-\|\tilde{\e}_h^{\ell+1}\|_2^2+\|2\tilde{\e}_h^{\ell+2}-\tilde{\e}_h^{\ell+1}\|_2^2-\|2\tilde{\e}_h^{\ell+1}-\tilde{\e}_h^\ell\|_2^2\\
	\le \, & \mathcal{C} k ( \|\nabla_h\tilde{\e}_h^{\ell+2}\|_2^2  + \|\tilde{\e}_h^{\ell+2}\|_2^2 + \| \e_h^{\ell+1}\|_2^2 + \| \e_h^{\ell}\|_2^2 ) + \mathcal{C} k (k^4+h^4) 	. \nonumber
	\end{align}
	
	However, the standard $L^2$ error estimate~\cref{ccc34} does not allow one to apply discrete Gronwall inequality, due to the $H_h^1$ norms of the error function involved on the right hand side. To overcome this difficulty, we take a discrete inner product with the numerical error equation~\cref{ccc73} by $-\Delta_h \tilde{\e}_h^{\ell+2}$ and see that
	\begin{align}
	R.H.S.&= \left\langle-\left( 2{\m}_h^{\ell+1}-{\m}_h^\ell\right)\times\Delta_h\tilde{\e}_h^{\ell+2},-\Delta_h\tilde{\e}_h^{\ell+2} \right\rangle\\
	&-\left\langle\left( 2{\e}_h^{\ell+1}-{\e}_h^\ell\right)\times\Delta_h\underline{\m}_h^{\ell+2}, -\Delta_h\tilde{\e}_h^{\ell+2}\right\rangle
	+\left\langle\tau_h^{\ell+2},-\Delta_h\tilde{\e}_h^{\ell+2} \right\rangle \nonumber\\
	&-\alpha \left\langle \left( 2{\m}_h^{\ell+1}-{\m}_h^\ell\right)\times\left( (2{\m}_h^{\ell+1}-{\m}_h^\ell)\times\Delta_h\tilde{\e}_h^{\ell+2}\right),-\Delta_h\tilde{\e}_h^{\ell+2}\right\rangle\nonumber\\
	&-\alpha \left\langle\left( 2{\m}_h^{\ell+1}-{\m}_h^\ell\right)\times\left( (2{\e}_h^{\ell+1}-{\e}_h^\ell)\times\Delta_h\underline{\m}_h^{\ell+2} \right),-\Delta_h\tilde{\e}_h^{\ell+2} \right\rangle\nonumber\\
	&-\alpha \left\langle \left( 2{\e}_h^{\ell+1}-{\e}_h^\ell\right)\times\left( (2\underline{\m}_h^{\ell+1}-\underline{\m}_h^\ell)\times\Delta_h\underline{\m}_h^{\ell+2}\right),-\Delta_h\tilde{\e}_h^{\ell+2}\right\rangle\nonumber\\
	&=: I_1+I_2+I_3+I_4+I_5+I_6.\nonumber
	\end{align}
	\begin{itemize}
		\item Estimate of $I_1$:
		\begin{align}
		I_1 =\left\langle-(2{\m}_h^{\ell+1}-{\m}_h^\ell)\times\Delta_h\tilde{\e}_h^{\ell+2},-\Delta_h\tilde{\e}_h^{\ell+2}\right\rangle=0.\label{ccc11}
		\end{align}
		\item Estimate of $I_2$: 		
		\begin{align}\label{ccc14}
		I_2 =\, &-\left\langle (2{\e}_h^{\ell+1}-{\e}_h^\ell)\times\Delta_h\um_h^{\ell+2},-\Delta_h\tilde{\e}_h^{\ell+2}\right\rangle \\
		= \, & \left\langle \nabla_h\left(\Delta_h\um_h^{\ell+2}\times(2{\e}_h^{\ell+1}-{\e}_h^\ell) \right),\nabla_h\tilde{\e}_h^{\ell+2} \right\rangle \nonumber\\
		\le \, & \mathcal{C}\Big(\|\nabla_h\tilde{\e}_h^{\ell+2}\|_2^2+\|\Delta_h\m_h^{\ell+2}\|_{\infty}^2\cdot\|\nabla_h(2{\e}_h^{\ell+1}-{\e}_h^\ell)\|_2^2\nonumber\\
		&+\|\nabla_h(\Delta_h\m_h^{\ell+2})\|_{\infty}^2\cdot\|2{\e}_h^{\ell+1}-{\e}_h^\ell\|_2^2\Big) \nonumber \\
		\le \, & \mathcal{C} \Big( \|\nabla_h\tilde{\e}_h^{\ell+2}\|_2^2 + \|\nabla_h\e_h^{\ell+1}\|_2^2+\|\nabla_h\e_h^{\ell}\|_2^2 +\|\e_h^{\ell+1}\|_2^2+\|\e_h^{\ell}\|_2^2\Big) .\nonumber
		\end{align}
		
		\item Estimate of the truncation error term $I_3$:
		\begin{align}\label{ccc23}
		I_3=\left\langle-\Delta_h\tilde{\e}_h^{\ell+2},\tau^{\ell+2}\right\rangle
		&\leq \mathcal{C}\|\nabla_h\tilde{\e}_h^{\ell+2}\|_2^2+\mathcal{C}(k^4+h^4).
		\end{align}		
		\item Estimate of $I_4$: It follows from~\cref{lem27_3} in~\cref{lem27} that
		\begin{align}\label{ccc16}
		I_4 = \,&-\alpha \left\langle (2{\m}_h^{\ell+1}-{\m}_h^\ell)\times\big((2{\m}_h^{\ell+1}-{\m}_h^\ell)\times\Delta_h\tilde{\e}_h^{\ell+2}\big),-\Delta_h\tilde{\e}_h^{\ell+2}\right\rangle \\
		= \, & \alpha \left\langle (2{\m}_h^{\ell+1}-{\m}_h^\ell)\times\Delta_h\tilde{\e}_h^{\ell+2} , \Delta_h\tilde{\e}_h^{\ell+2}\times(2{\m}_h^{n+1}-{\m}_h^\ell) \right\rangle \nonumber\\
		= \, & -\alpha \|(2{\m}_h^{\ell+1}-{\m}_h^\ell)\times\Delta_h\tilde{\e}_h^{\ell+2}\|_2^2\leq 0. \nonumber
		\end{align}
		
		\item Estimates of $I_5$ and $I_6$:
		\begin{align}\label{point21}
		I_5=\,&-\alpha \left\langle (2{\m}_h^{\ell+1}-{\m}_h^\ell)\times\big((2{\e}_h^{\ell+1}-{\e}_h^\ell)\times\Delta_h\um_h^{\ell+2}\big),-\Delta_h\tilde{\e}_h^{\ell+2}\right\rangle \\
		= \, & - \alpha \left\langle \nabla_h\left( (2{\m}_h^{\ell+1}-{\m}_h^\ell)\times\big((2{\e}_h^{\ell+1}-{\e}_h^\ell)\times\Delta_h\um_h^{\ell+2}\big)\right),\nabla_h\tilde{\e}_h^{\ell+2} \right\rangle \nonumber\\
		\le \, & \mathcal{C}\Big( \|\nabla_h\tilde{\e}_h^{\ell+2}\|_2^2+\|\nabla_h(2{\m}_h^{\ell+1}-{\m}_h^\ell)\|_{\infty}^2\cdot\|\Delta_h\um_h^{\ell+2}\|_{\infty}^2\cdot\|2{\e}_h^{\ell+1}-{\e}_h^\ell\|_2^2\nonumber\\
		&+\|2{\m}_h^{\ell+1}-{\m}_h^\ell\|_{\infty}^2\cdot\|\nabla_h(\Delta_h\um_h^{\ell+2})\|_{\infty}^2\cdot\|2{\e}_h^{\ell+1}-{\e}_h^\ell\|_2^2\nonumber\\
		&+\|2{\m}_h^{\ell+1}-{\m}_h^\ell\|_{\infty}^2\cdot\|\Delta_h\um_h^{\ell+2}\|_{\infty}^2\cdot\|\nabla_h(2{\e}_h^{\ell+1}-{\e}_h^\ell)\|_2^2\Big) \nonumber\\
		\le \, & \mathcal{C} \Big( \|\nabla_h\tilde{\e}_h^{\ell+2}\|_2^2 + \|\nabla_h\e_h^{\ell+1}\|_2^2+\|\nabla_h\e_h^{\ell}\|_2^2 +\|\e_h^{\ell+1}\|_2^2+\|\e_h^{\ell}\|_2^2\Big) .			\nonumber
		\end{align}
		
		\begin{align}\label{point22}
		I_6 = \,&-\alpha \left\langle (2{\e}_h^{\ell+1}-{\e}_h^\ell)\times\big((2\um_h^{\ell+1}-\um_h^\ell)\times\Delta_h\um_h^{\ell+2}\big),-\Delta_h\tilde{\e}_h^{\ell+2}\right\rangle \\
		= \, & - \alpha \left\langle \nabla_h\Big[(2{\e}_h^{\ell+1}-{\e}_h^\ell)\times\big((2\um_h^{\ell+1}-\um_h^\ell)\times\Delta_h\um_h^{\ell+2}\big)\Big],\nabla_h\tilde{\e}_h^{\ell+2}\right\rangle \nonumber\\
		\le \, & \mathcal{C} \Big( \|\nabla_h\tilde{\e}_h^{\ell+2}\|_2^2+\|\nabla_h(2{\e}_h^{\ell+1}-{\e}_h^\ell)\|_2^2\cdot\|2\um_h^{\ell+1}-\um_h^\ell\|_{\infty}^2\cdot\|\Delta_h\um_h^{\ell+2}\|_{\infty}^2\nonumber\\
		&+\|2{\e}_h^{\ell+1}-{\e}_h^\ell\|_2^2\cdot\|\nabla_h(2\um_h^{\ell+1}-\um_h^\ell)\|_{\infty}^2\cdot\|\Delta_h\um_h^{\ell+2}\|_{\infty}^2\nonumber\\
		&+\|2{\e}_h^{\ell+1}-{\e}_h^\ell\|_2^2\cdot\|2\um_h^{\ell+1}-\um_h^\ell\|_{\infty}^2\cdot\|\nabla_h(\Delta_h\um_h^{\ell+2})\|_{\infty}^2 \Big) \nonumber\\
		\le \, & \mathcal{C} \Big( \|\nabla_h\tilde{\e}_h^{\ell+2}\|_2^2 + \|\nabla_h\e_h^{\ell+1}\|_2^2+\|\nabla_h\e_h^{\ell}\|_2^2 +\|\e_h^{\ell+1}\|_2^2+\|\e_h^{\ell}\|_2^2\Big) .			\nonumber
		\end{align}
		
	\end{itemize}
	And also, the inner product on the left hand side becomes
	\begin{align}\label{ccc25}
	L.H.S.&=\frac{1}{4k}\big( \|\nabla_h\tilde{\e}_h^{\ell+2}\|_2^2-\|\nabla_h\tilde{\e}_h^{\ell+1}\|_2^2+\|2\nabla_h\tilde{\e}_h^{\ell+2}-\nabla_h\tilde{\e}_h^{\ell+1}\|_2^2\\
	&-\|2\nabla_h\tilde{\e}_h^{\ell+1}-\nabla_h\tilde{\e}_h^\ell\|_2^2+\|\nabla_h\tilde{\e}_h^{\ell+2}-2\nabla_h\tilde{\e}_h^{\ell+1}+\nabla_h\tilde{\e}_h^\ell\|_2^2\big).\nonumber
	\end{align}
	Substituting~\cref{ccc11}, \cref{ccc14,ccc16,point21,point22,ccc23} into~\cref{ccc73}, combined with~\cref{ccc25}, we arrive at
	\begin{align}\label{26}
	&\|\nabla_h\tilde{\e}_h^{\ell+2}\|_2^2-\|\nabla_h\tilde{\e}_h^{\ell+1}\|_2^2+\|2\nabla_h\tilde{\e}_h^{\ell+2}-\nabla_h\tilde{\e}_h^{\ell+1}\|_2^2-\|2\nabla_h\tilde{\e}_h^{\ell+1}-\nabla_h\tilde{\e}_h^\ell\|_2^2\\
	\le \, & \mathcal{C} k \Big( \|\nabla_h\tilde{\e}_h^{\ell+2}\|_2^2 + \|\nabla_h\e_h^{\ell+1}\|_2^2+\|\nabla_h\e_h^{\ell}\|_2^2 +\|\e_h^{\ell+1}\|_2^2+\|\e_h^{\ell}\|_2^2\Big) +\mathcal{C}k (k^4+h^4) . \nonumber
	\end{align}
	As a consequence, a combination of~\cref{ccc34} and~\cref{26} yields
	\begin{align}\label{convergence-1}
	&\|\tilde{\e}_h^{\ell+2}\|_2^2-\|\tilde{\e}_h^{\ell+1}\|_2^2+\|2\tilde{\e}_h^{\ell+2}-\tilde{\e}_h^{\ell+1}\|_2^2-\|2\tilde{\e}_h^{\ell+1}-\tilde{\e}_h^\ell\|_2^2 \\	
	&+ \|\nabla_h\tilde{\e}_h^{\ell+2}\|_2^2 - \|\nabla_h\tilde{\e}_h^{\ell+1}\|_2^2
	+\| \nabla_h ( 2\tilde{\e}_h^{\ell+2} - \tilde{\e}_h^{\ell+1} ) \|_2^2
	- \| \nabla_h (2 \tilde{\e}_h^{\ell+1} - \tilde{\e}_h^\ell\ ) \|_2^2 \nonumber \\
	\le \, & \mathcal{C} k \Big( \|\nabla_h\tilde{\e}_h^{\ell+2}\|_2^2
	+ \| \tilde{\e}_h^{\ell+2}\|_2^2 + \|\nabla_h\e_h^{\ell+1}\|_2^2
	+ \|\nabla_h\e_h^{\ell}\|_2^2 +\|\e_h^{\ell+1}\|_2^2+\|\e_h^{\ell}\|_2^2 \Big)
	\nonumber  \\
	&  +\mathcal{C}k (k^4+h^4) . \nonumber
	\end{align}
	At this point, recalling the $W_h^{1,\infty}$ bound for $\m_h^k$ and $\tilde{\m}_h^k$, as given by~\cref{bound-3}, \cref{bound-4}, and applying~\cref{lem 6-2} in~\cref{lem 6-0}, we obtain
	\begin{equation*}
	\| \e_h^k \|_2 \le 2 \| \tilde{\e}_h^k \|_2 + \mathcal{O} (h^2) ,  \, \, \,
	\| \nabla_h \e_h^k \|_2 \le \mathcal{C} ( \| \nabla_h \tilde{\e}_h^k \|_2
	+ \| \tilde{\e}_h^k \|_2 ) + \mathcal{O} (h^2) , \quad \mbox{$k = \ell, \ell+1$} .  \label{convergence-2}
	\end{equation*}
	Its substitution into~\cref{convergence-1} leads to
	\begin{align*}
	&\|\tilde{\e}_h^{\ell+2}\|_2^2-\|\tilde{\e}_h^{\ell+1}\|_2^2+\|2\tilde{\e}_h^{\ell+2}-\tilde{\e}_h^{\ell+1}\|_2^2-\|2\tilde{\e}_h^{\ell+1}-\tilde{\e}_h^\ell\|_2^2 \\	
	&+ \|\nabla_h\tilde{\e}_h^{\ell+2}\|_2^2 - \|\nabla_h\tilde{\e}_h^{\ell+1}\|_2^2
	+\| \nabla_h ( 2\tilde{\e}_h^{\ell+2} - \tilde{\e}_h^{\ell+1} ) \|_2^2
	- \| \nabla_h (2 \tilde{\e}_h^{\ell+1} - \tilde{\e}_h^\ell\ ) \|_2^2 \nonumber \\
	\le \, & \mathcal{C} k \Big( \|\nabla_h\tilde{\e}_h^{\ell+2}\|_2^2
	+ \|\nabla_h \tilde{\e}_h^{\ell+1}\|_2^2
	+ \|\nabla_h \tilde{\e}_h^{\ell} \|_2^2 + \| \tilde{\e}_h^{\ell+2}\|_2^2
	+\| \tilde{\e}_h^{\ell+1}\|_2^2 + \| \tilde{\e}_h^{\ell}\|_2^2 \Big)
	\nonumber  \\
	&  +\mathcal{C}k (k^4+h^4) . \nonumber
	\end{align*}
	In turn, an application of discrete Gronwall inequality (in~\cref{ccclem1}) yields the desired convergence estimate for $\tilde{\e}_h$:
	\begin{align*}
	\| \tilde{\e}_h^n\|_2^2 + \|\nabla_h\tilde{\e}_h^n\|_2^2\leq \mathcal{C}Te^{\mathcal{C}T}(k^4+h^4), \quad \text{for all } n:n\leq \left\lfloor\frac{T}{k}\right\rfloor , 
	\end{align*}
	i.e.,
	\begin{align*} 
	\|\tilde{\e}_h^n\|_2 + \|\nabla_h\tilde{\e}_h^n\|_2
	&\le \mathcal{C}(k^2+h^2) .
	\end{align*}
	An application of~\cref{ccclemC1}, as well as the time step constraint $k\leq \mathcal{C}h$, leads to
	\begin{align}\label{boundd-5}
	\|\tilde{\e}_h^n\|_{\infty} &\leq \frac{\|\tilde{\e}_h^n\|_2}{h^{d/2}}\leq \frac{\mathcal{C}(k^2+h^2)}{h^{d/2}}\leq \frac{1}{6}, \\
	\|\nabla_h\tilde{\e}_h^n\|_{\infty} &\leq \frac{\|\nabla_h\tilde{\e}_h^n\|_2}{h^{d/2}}\leq \frac{\mathcal{C}(k^2+h^2)}{h^{d/2}}\leq \frac{1}{6},\nonumber
	\end{align}
	so that the second part of the \textit{a priori} assumption~\cref{bound-2} has been recovered at time step $k=n$. In turn, the $W_h^{1,\infty}$ bound~\cref{bound-4} becomes available, which enables us to apply~\cref{lem 6-2} in~\cref{lem 6-0}, and obtain the desired convergence estimate for $\e_h^n$:
	\begin{align*}
	& \| \e_h^n \|_2 \le 2 \| \tilde{\e}_h^n \|_2 + \mathcal{O} (h^2)
	\le \mathcal{C} (k^2 + h^2)  ,  \label{convergence-6}  \\
	& \| \nabla_h \e_h^n \|_2 \le \mathcal{C} ( \| \nabla_h \tilde{\e}_h^n \|_2
	+ \| \tilde{\e}_h^n \|_2 ) + \mathcal{O} (h^2)
	\le \mathcal{C} (k^2 + h^2) .  \nonumber
	\end{align*}
	Similar to the derivation of~\cref{boundd-5}, we also get
	\begin{align*}  
	\| \e_h^n\|_{\infty} &\leq  \frac{1}{6} , \quad
	\|\nabla_h\e_h^n\|_{\infty} \leq \frac{1}{6} ,
	\end{align*}
	so that the first part of the \textit{a priori} assumption~\cref{bound-2} has been recovered at time step $k=n$. This completes the proof of~\cref{cccthm2}.
\end{proof}

\section{Numerical examples}
\label{sec:experiments}

In this section, we perform 1-D and 3-D numerical experiments for the final time $T=1$ to verify the theoretical analysis in \cref{sec:main theory}.
Rate of convergence is obtained via the least-squares fitting for a sequence of error data recorded with successive step-size refinements.

In details, we test four examples: 1-D example with a forcing term and the given exact solution, 1-D example without the exact solution,
and 3-D example with a forcing term and the given exact solution, 3-D example with respect to the domain wall dynamics without exact solution for full Landau-Lifshitz equation in \cite{Xie2018}. Solutions in these four cases satisfy the homogenous Neumann boundary
condition \cref{boundary}. In the presence of an forcing term, the LL equation reads as
\begin{equation*}
{\m}_t=-{\m}\times\Delta{\m}-\alpha{\m}\times({\m}\times\Delta{\m})+\f , \label{cccc4}
\end{equation*}
with $\f={\m}_{et}+{\m}_e\times\Delta{\m_e}+\alpha{\m_e}\times\left( {\m_e}\times\Delta{\m_e}\right)$
and $\m_e$ the exact solution. In more details, the forcing term $\f$ is evaluated at $t^{n+2}$ in the numerical scheme \cref{scheme-1-1}. Only one linear system of equations needs to solve at each time step. In all examples, we find that the
scheme is unconditionally stable.

\begin{example}[1-D example with the given exact solution]
	The given exact solution is $\m_e=\left(\cos(x^2(1-x)^2)\sin t, \sin(x^2(1-x)^2)\sin t, \cos t\right)^T$,
	which satisfies the homogeneous Neumman boundary condition.
	Results in \cref{ccctab-1} and \cref{BDF2_plus_LL} suggest the second-order accuracy in both time and space
	of the proposed method in the discrete $H^1-$norm; and in \cref{cccctab-1} indicate the unconditional stability of our method in the 1D case. 
	
	\begin{table}[htbp]
		\caption{Accuracy of our method on the uniform mesh when $h=k$ and $\alpha=0.01$. }
		\label{ccctab-1}
		\centering
		\begin{tabular}{c|c|c|c}
			\hline \hline
			$k$ & $\|\m_h-\m_e\|_{\infty}$ & $\|\m_h-\m_e\|_{2}$ & $\|\m_h-\m_e\|_{H^1}$ \\
			\hline
			5.0D-3 & 3.867D-5 & 4.115D-5 & 1.729D-4 \\
			2.5D-3 & 7.976D-6 & 1.053D-5 & 4.629D-5 \\
			1.25D-3 & 2.135D-6 & 2.648D-6 & 1.177D-5 \\
			6.25D-4 & 5.765D-7 & 6.627D-7 & 2.949D-6 \\
			3.125D-4 & 1.447D-7 & 1.657D-7 & 7.370D-7 \\
			\hline
			order & 1.991 & 1.990 & 1.972 \\
			\hline \hline
		\end{tabular}
	\end{table}
	
	\begin{table}[!htbp]
		\caption{No stability constraint of $k$ for our method in 1D case when $\alpha=0.01$. }
		\label{cccctab-1}
		\centering
		\begin{tabular}{c|c|c|c|c}
			\hline \hline
			\diagbox{$k$}{$\|\m_h-\m_e\|_{\infty}$}{$h$}&1.0D-1&5.0D-2&2.5D-2 &1.25D-2\\
			\hline
			2.0D-1& 2.318D-2 & 2.106D-2 & 2.056D-2 & 2.046D-2 \\
			1.0D-1& 1.015D-2 & 7.571D-3 & 6.928D-3 & 6.768D-3 \\
			5.0D-2& 5.503D-3 & 2.807D-3 & 2.134D-3 & 1.966D-3 \\
			2.5D-2& 4.166D-3 & 1.436D-3 & 7.521D-4 & 5.811D-4 \\
			1.25D-2& 3.783D-3& 1.062D-3 & 3.913D-4 & 2.234D-4 \\
			6.25D-3& 3.709D-3 & 9.714D-4 & 2.831D-4 & 1.108D-4 \\
			\hline \hline
		\end{tabular}
	\end{table}
	
	\begin{figure}[htbp]
		\centering
		\includegraphics[width=0.5\textwidth]{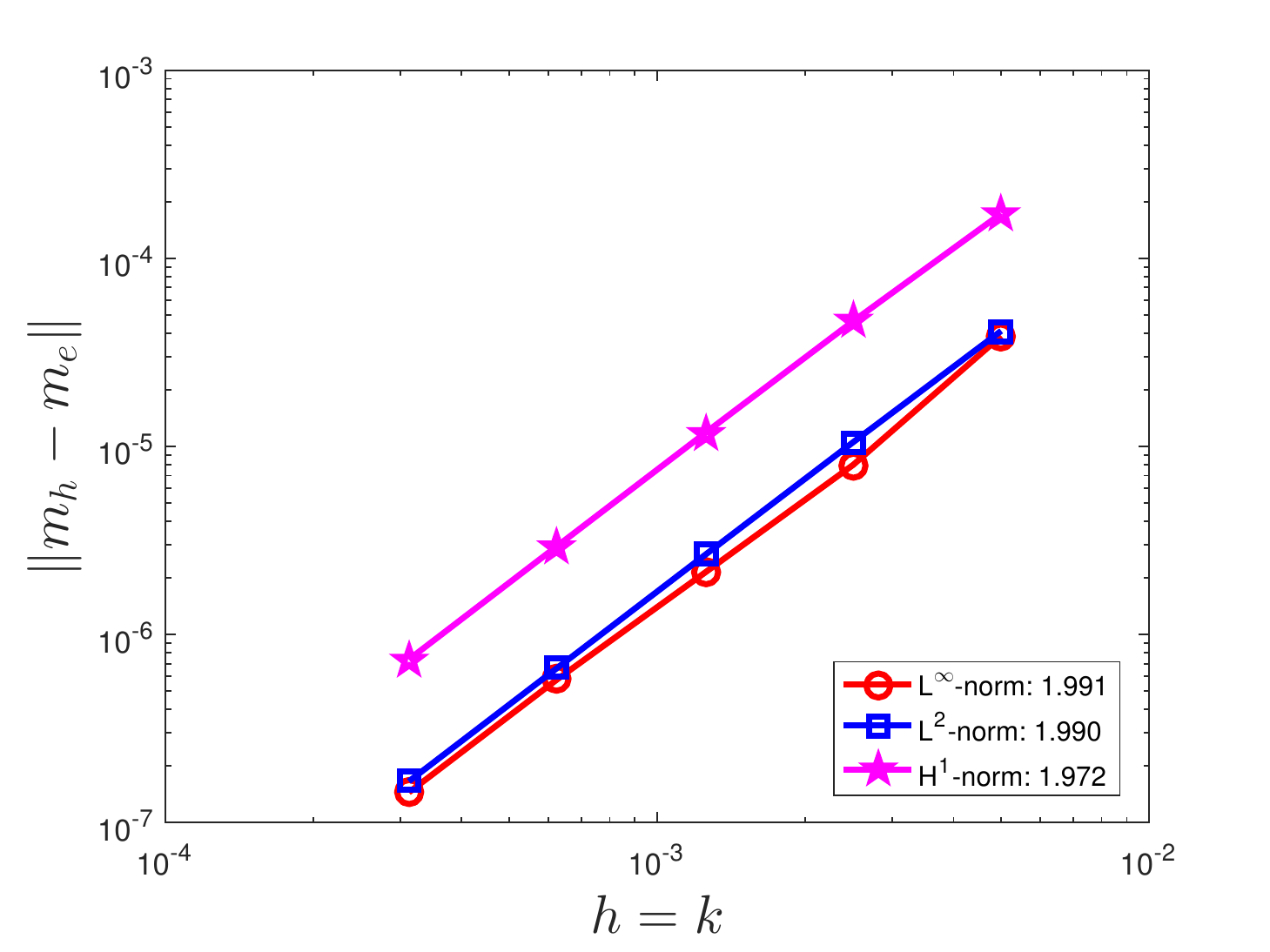}\\
		\caption{Accuracy of our method on the uniform mesh when $h=k$ and $\alpha=0.01$.}\label{BDF2_plus_LL}
	\end{figure}
\end{example}

\begin{example}[1-D example without the exact solution]
	For this example, in the absence of the forcing term, we do not have the exact solution. For comparison, we first set $h$ and $k$
	small enough to obtain a numerical solution which will be used as the exact (reference) solution. In this test, we take the initial condition as $\m_0(\x,0)=(0,0,1)^{T}$ for $x\in \Omega$. To get the temporal accuracy,
	we set $h=1D-4$ and $k=1D-4$ to get the exact solution and then record the temporal error with varying $k$ in \cref{ccctab-3} and \cref{BDF2_plus_prime_temproal_1D}. To get the spatial accuracy, we set $h=1/3^8$ and $k=1D-4$ to get the exact solution and
	record the error in \cref{ccctab-6} and \cref{BDF2_plus_prime_spatial_1D}.  Again, the second-order accuracy in both time and space
	in the discrete $H^1-$norm are confirmed.
	\begin{table}[htbp]
		\caption{Temporal accuracy of our method on the uniform mesh when $h=1D-4$ and $\alpha=0.01$. The exact solution
			is obtained with $h=1D-4$ and $k=1D-4$.}
		\label{ccctab-3}
		\centering
		\begin{tabular}{c|c|c|c}
			\hline \hline
			$k$ & $\|\m_h-\m_e\|_{\infty}$ &$\|\m_h-\m_e\|_{2}$ & $\|\m_h-\m_e\|_{H^1}$ \\
			\hline
			5.0D-3& 2.949D-5& 3.250D-5 & 1.633D-4\\
			2.5D-3& 8.116D-6& 8.429D-6 & 4.393D-5\\
			1.25D-3& 2.125D-6 & 2.114D-6 & 1.118D-5\\
			6.25D-4& 4.851D-7 &5.190D-7 & 2.791D-6\\
			3.125D-4& 1.129D-7 &1.196D-7 & 6.875D-7\\
			\hline
			order& 2.012 & 2.019 & 1.976\\
			\hline \hline
		\end{tabular}	
	\end{table}
	
	\begin{table}[htbp]
		\caption{Spatial accuracy of our method on the uniform mesh when $k=1D-4$ and $\alpha=0.01$.
			The exact solution is obtained with $h=1/3^8$ and $k=1D-4$.}
		\label{ccctab-6}
		\centering
		\begin{tabular}{c|c|c|c}
			\hline \hline
			$h$ &$\|\m_h-\m_e\|_{\infty}$&$\|\m_h-\m_e\|_{2}$ & $\|\m_h-\m_e\|_{H^1}$ \\
			\hline
			$1/3^2$&0.00546 &0.00577 &0.01336 \\
			$1/3^3$&6.101D-4 & 6.430D-4&0.00160 \\
			$1/3^4$&6.782D-5 &7.146D-5 &1.820D-4 \\
			$1/3^5$&7.527D-6 &7.930D-6 & 2.036D-5\\
			$1/3^6$&8.271D-7 &8.714D-7 &2.243D-6 \\
			\hline
			order& 2.001 & 2.002& 1.980\\
			\hline \hline
		\end{tabular}
	\end{table}
	
	\begin{figure}[htbp]
		\centering
		\subfloat[Temporal accuracy]{\label{BDF2_plus_prime_temproal_1D}\includegraphics[width=2.5in]{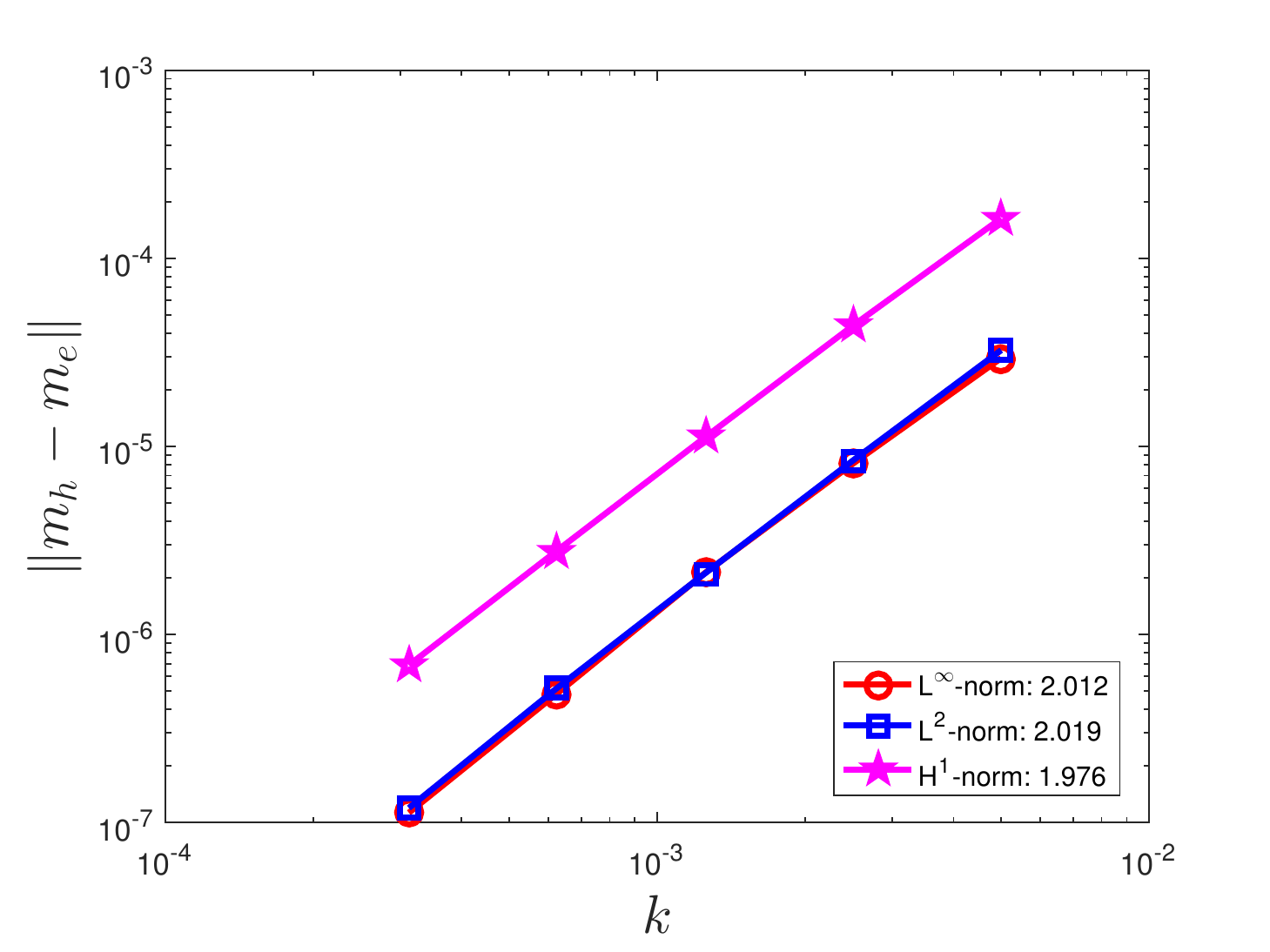}}
		\subfloat[Spatial accuracy]{\label{BDF2_plus_prime_spatial_1D}\includegraphics[width=2.5in]{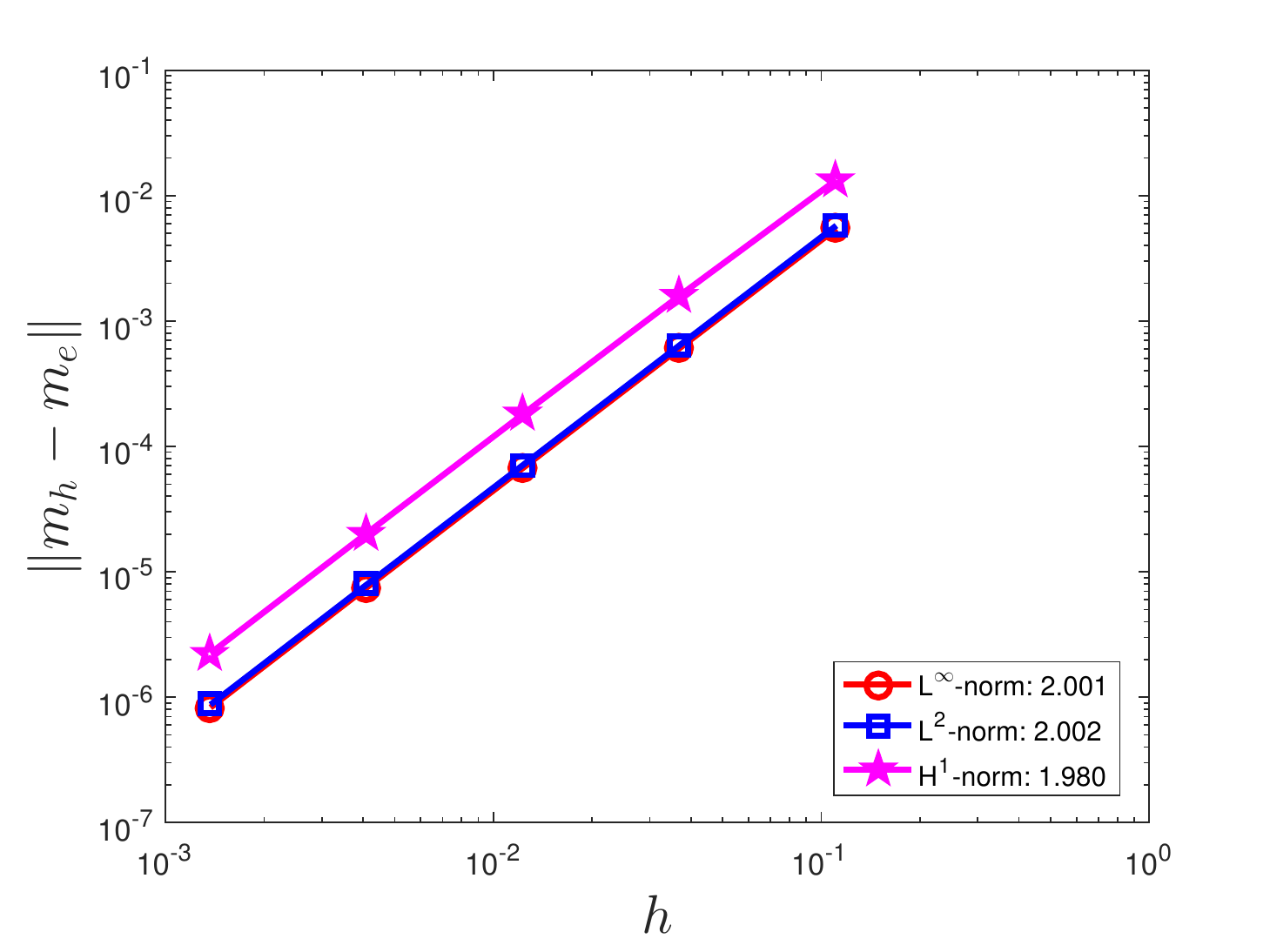}}
		\caption{Accuracy of our method when $\alpha=0.01$. (a) Temporal accuracy of our method on the uniform mesh when $h=1D-4$ and $\alpha=0.01$.
			The exact solution is obtained with $h=1D-4$ and $k=1D-4$; (b) Spatial accuracy of our method on the uniform mesh when $k=1D-4$ and $\alpha=0.01$.
			The exact solution is obtained with $h=1/3^8$ and $k=1D-4$.}
		\label{BDF2_plus_prime_1D}
	\end{figure}

\end{example}

\begin{example}[3-D example with the given exact solution]
	The given exact solution read as
	\begin{equation*}
	\m_e=\left(\cos(XYZ)\sin t, \sin(XYZ)\sin t, \cos t\right)^T,
	\end{equation*}
	where $X=x^2(1-x)^2$, $Y=y^2(1-y)^2$, $Z=z^2(1-z)^2$.
	
	\cref{ccctab-5} shows the second-order convergence in time in the 3-D case. Result in \cref{cccctab-1-3d} indicates the unconditional stability of our method in the 3D case. We visualize the magnetization in~\cref{fig:2D_s}
	by taking a slice along the $z=1/2$ plane. The arrow denotes the vector from magnetization component $u$ to $v$ and the colormap
	represents the third magnetization component $w$. \cref{2D_exact} and \cref{2D_s4} plot the exact magnetization and the numerical magnetization
	when $k=1/256$ and $h_x=h_y=h_z=1/32$, respectively.
	
	\begin{table}[htbp]
		\centering
		\caption{Temporal accuracy in the 3-D case when $h_x=h_y=h_z=1/32$ and $\alpha=0.01$. }\label{ccctab-5}
		\begin{tabular}{c|c|c|c}
			\hline \hline
			$k$&$\|\m_h-\m_e\|_{\infty}$ & $\|\m_h-\m_e\|_2$& $\|\m_h-\m_e\|_{H^1}$ \\
			\hline
			1/16 & 1.685D-3 & 1.098D-3 & 1.211D-3 \\
			1/32 & 4.411D-4 & 2.964D-4 & 3.082D-4 \\
			1/64 & 1.128D-4 & 7.730D-5 & 7.772D-5 \\
			1/128 & 2.966D-5 & 2.024D-5 & 2.051D-5 \\
			1/256 & 8.311D-6 & 5.693D-6 & 5.812D-6 \\
			\hline
			order& 1.922 & 1.906 & 1.932\\
			\hline \hline
		\end{tabular}
	\end{table}
	
	\begin{table}[!htbp]
		\caption{No stability constraint of $k$ for our method in 3D case when $\alpha=0.01$. }
		\label{cccctab-1-3d}
		\centering
		\begin{tabular}{c|c|c|c|c}
			\hline \hline
			\diagbox{$k$}{$\|\m_h-\m_e\|_{\infty}$}{$h$}&1/4&1/8&1/16 &1/32\\
			\hline
			1/4& 1.370D-2 & 1.365D-2 & 1.370D-2 & 1.421D-2 \\
			1/8& 5.470D-3 & 5.415D-3 & 5.407D-3 & 5.686D-3 \\
			1/16& 1.675D-3 & 1.619D-3 & 1.605D-3 & 1.685D-3 \\
			1/32& 5.052D-4 & 4.495D-4 & 4.355D-4 & 4.411D-4 \\
			1/64& 1.860D-4 & 1.303D-4 & 1.163D-4 & 1.128D-4 \\
			1/128& 1.029D-4 & 4.680D-5 & 3.311D-5 & 2.966D-5 \\
			\hline \hline
		\end{tabular}
	\end{table}
	
	\begin{figure}[htbp]
		\centering
		\subfloat[Exact magnetization profile]{\label{2D_exact}\includegraphics[width=2.5in]{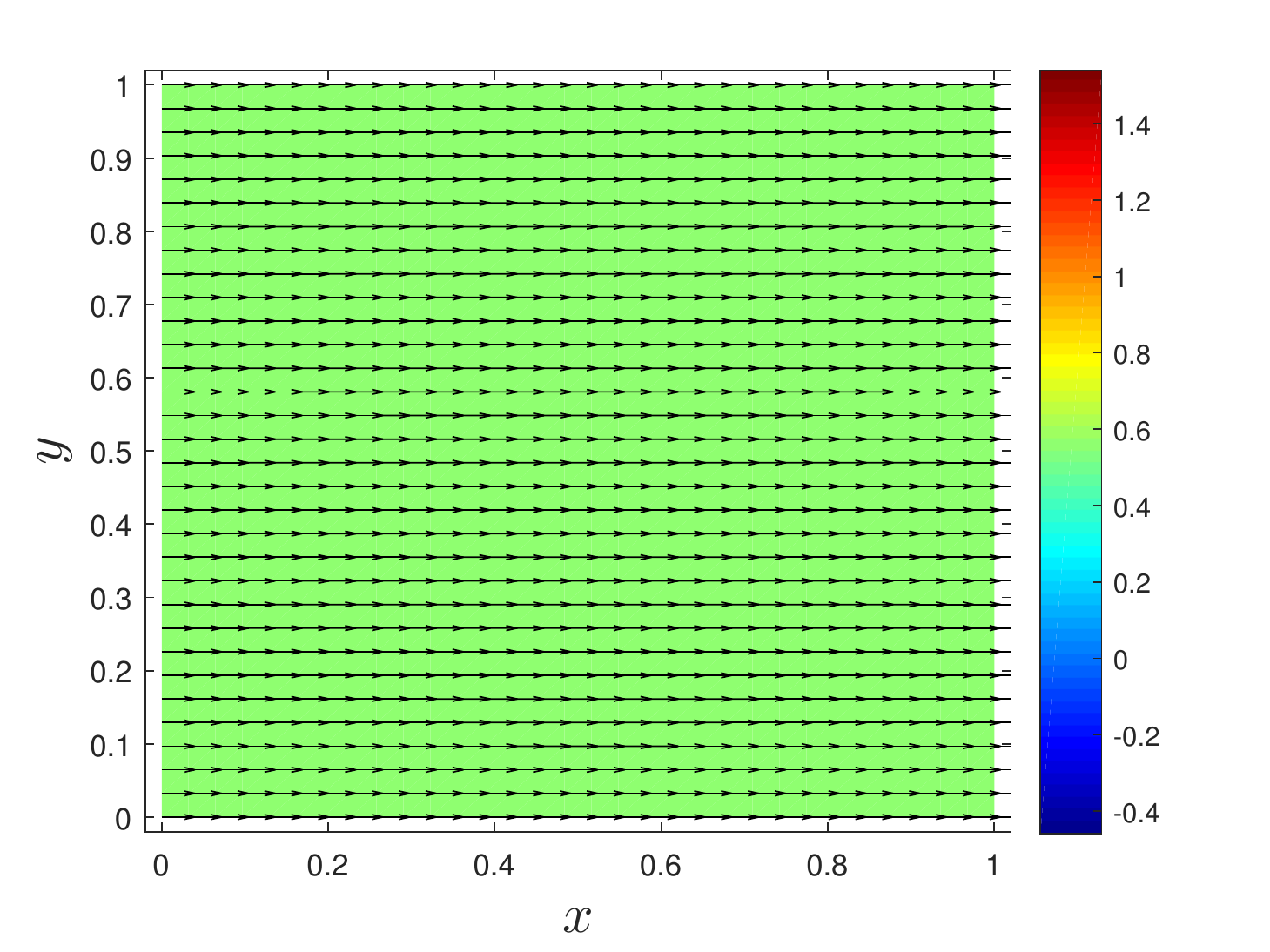}}
		\subfloat[Numerical magnetization profile]{\label{2D_s4}\includegraphics[width=2.5in]{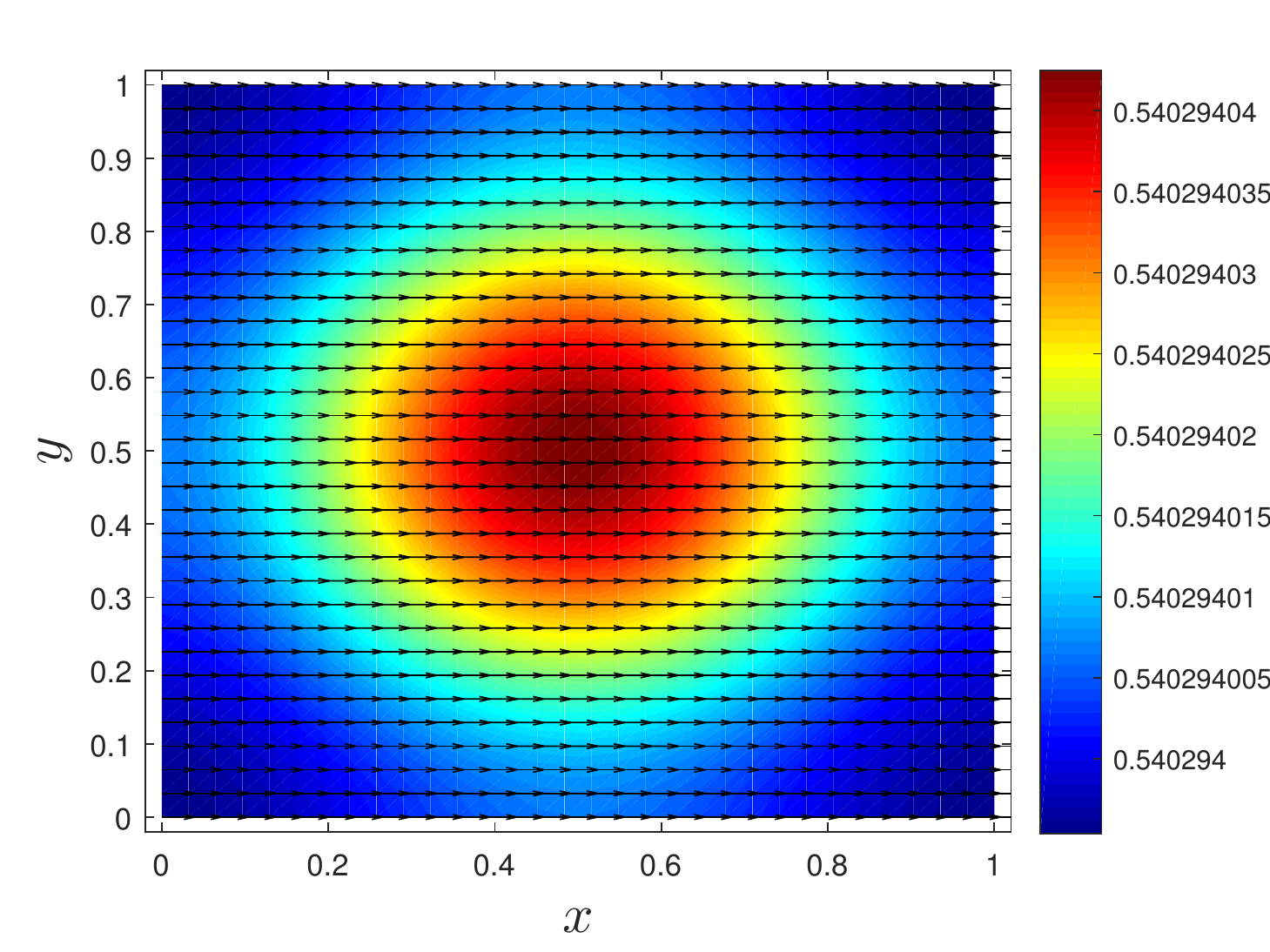}}
		\caption{Profiles of the exact and the numerical magnetization in the $xy-$plane with $z=1/2$ when $k=1/256$, $h_x=h_y=h_z=1/32$, and $\alpha=0.01$.}
		\label{fig:2D_s}
	\end{figure}
	
\end{example}

\begin{example}[3-D example for full Landau-lifshitz equation]
		We consider a magnetic nano strip of size $0.8 \times 0.1 \times 0.004\;\mu m^3$ and of grid points chosen as $128\times 32 \times 2$ in $x$, $y$, $z$ directions respectively. In our simulations, the damping coefficient $\alpha=0.1$ and the time scale is $k=1\;\textrm{ps}$. A stopping criterion is used to determine that a steady state is reached when the relative change in the total energy is less than $10^{-7}$. The transverse domain walls in a magnetic strip are able to be formed by an in plane head-to-head N\'{e}el wall as illustrated in \cref{Neel_wall_initial}. The domain wall dynamics is driven by a small external field imposed of strength $H_e=50\;\textrm{Oe}$. The domain wall moves along $x$ directions with a constant velocity $255\;\textrm{m/s}$. During the motion, the domain wall profile is maintained. The snapshots at time $t=0.5\; \textrm{ns},\; 1.0\; \textrm{ns}$ are shown in \cref{Neel_wall_mdata25,Neel_wall_mdata50}.
		
		\begin{figure}[htbp]
			\centering
			\subfloat[Initial state]{\label{Neel_wall_initial}\includegraphics[width=2.5in]{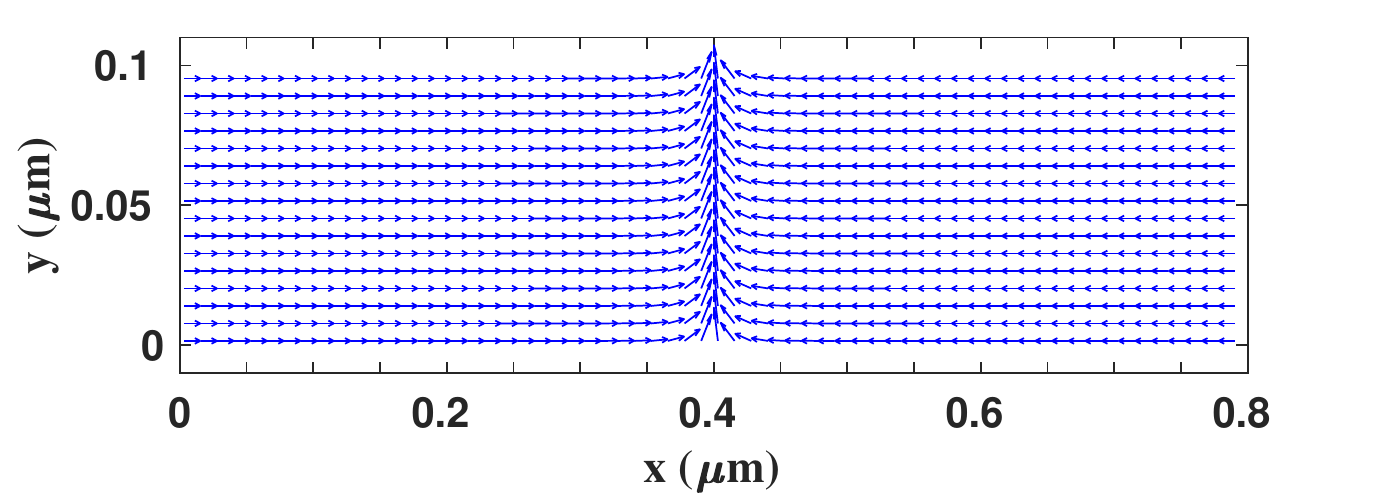}}
			\quad
			\subfloat[Magnetization profile at $t=0.5\; \textrm{ns}$]{\label{Neel_wall_mdata25}\includegraphics[width=2.5in]{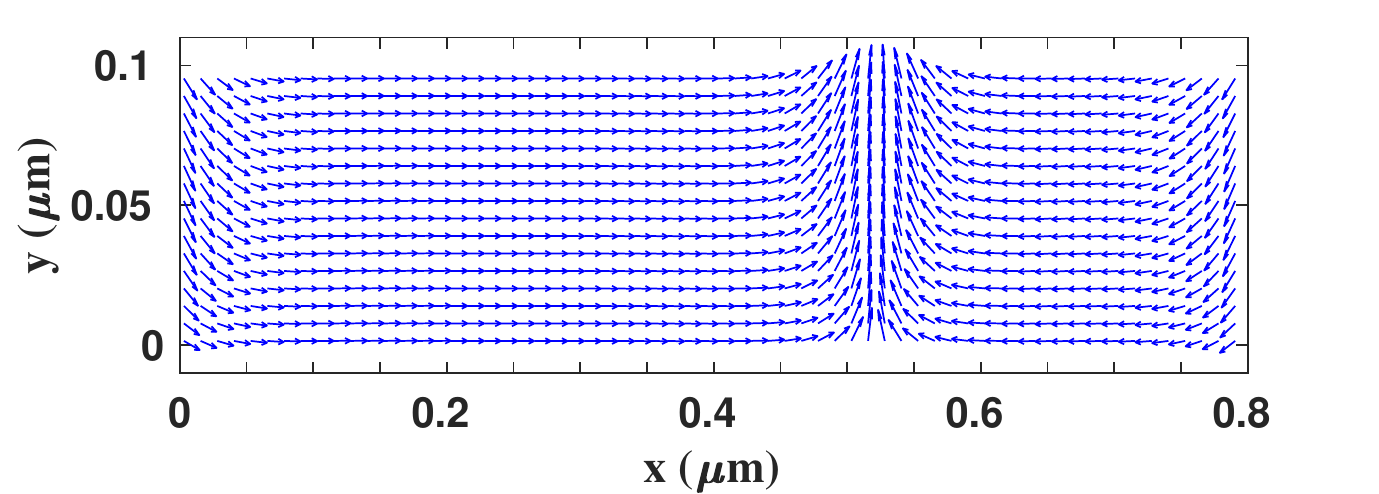}}
			\quad
			\subfloat[Magnetization profile at $t=1.0\; \textrm{ns}$]{\label{Neel_wall_mdata50}\includegraphics[width=2.5in]{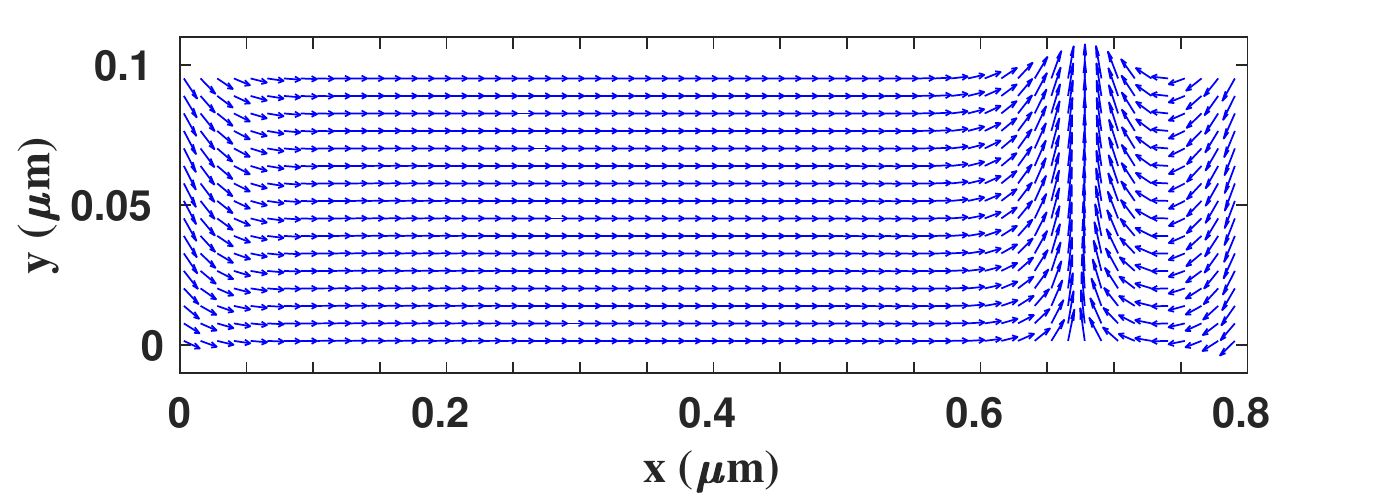}}
			\caption{Snapshots of the domain wall motion for the centered slice in $xy-$plane of the strip with the magnetic field $H_e=50\; \textrm{Oe}$ and the damping constant $\alpha=0.1$ at several times $t=0.0\;\textrm{ns},\; 0.5\; \textrm{ns},\; 1.0\; \textrm{ns}$ in (a),(b) and (c).}
			\label{fig:Neel_wall}
		\end{figure}	
		
	\end{example}

\section{Conclusions}
\label{sec:conclusions}

In this paper, we have proposed and analyzed a second-order time stepping scheme to solve the LL equation.
The second-order BDF is applied for temporal discretization and a linearized multistep approximation is used for
the nonlinear coefficients on the right hand side of the equation. The resulting scheme avoids a well-known
difficulty associated with the nonlinearity of the system, and its unique solvability is established via
the monotonicity analysis of the system. In addition, an optimal rate convergence analysis is provided,
by making use of a linearized stability analysis for the numerical error functions, in which the
$W_h^{1,\infty}$ error estimate at the projection step has played an important role.
Numerical experiments in both 1D and 3D cases are presented to verify the unconditional stability and the second-order convergence
	in both space and time, and applied to the domain wall dynamics driven by the external field. The technique presented here may be applicable to the model for current-driven
domain wall dynamics \cite{ChenGarciaCerveraYang:2015}, which shall be explored as a future project.

\appendix

\section*{Acknowledgments}
We thank Zhennan Zhou from Peking University for helpful discussions.
This work is supported in part by the grants NSFC 21602149, the Young Thousand Talents Program of China, 
and the Innovation and entrepreneurial talent program in Jiangsu (J.~Chen), NSF DMS-1418689 (C.~Wang), and the Innovation Program for postgraduates in Jiangsu province via grant KYCX19\_1947 (C.~Xie).

\bibliographystyle{amsplain}
\bibliography{references}
%
%
%
%

\end{document}